\documentclass[12pt]{article}

\usepackage[T1]{fontenc}
\usepackage[utf8]{inputenc}
\usepackage[margin=1in]{geometry}
\usepackage{amsmath, amssymb, amsthm}
\usepackage{hyperref}
\usepackage[numbers, sort&compress]{natbib}

\newtheorem{theorem}{Theorem}[section]
\newtheorem{lemma}[theorem]{Lemma}

\theoremstyle{definition}
\newtheorem{remark}[theorem]{Remark}

\newcommand{\R}{\mathbb{R}}
\newcommand{\norm}[1]{\left\|#1\right\|}
\newcommand{\abs}[1]{\left|#1\right|}
\newcommand{\eps}{\varepsilon}
\newcommand{\del}{\delta}
\newcommand{\kap}{\kappa}
\newcommand{\gam}{\gamma}
\newcommand{\tha}{\theta}
\newcommand{\Tha}{\Theta}
\newcommand{\rh}{\rho}

\newcommand{\Lp}[1]{L^{#1}(\R)}
\newcommand{\Hp}[1]{H^{#1}(\R)}
\newcommand{\intR}{\int_{\R}}
\newcommand{\intT}{\int_0^T}

\newcommand{\calE}{\mathcal{E}}
\newcommand{\calF}{\mathcal{F}}
\newcommand{\calG}{\mathcal{G}}
\newcommand{\calD}{\mathcal{D}}
\newcommand{\Up}{U^+}
\newcommand{\Um}{U^-}
\newcommand{\MU}{M_U}

\title{%
  \Large\bfseries
  Global Smooth Solutions to a Thermoelastic\\[4pt]
  Cauchy Problem in Phase Transitions%
}
\author{M.~Affouf \\ Department of Mathematics \\ Kean University \\ Union, NJ 07083 \\ USA}

\date{\today}
\begin{document}
\maketitle
\thispagestyle{empty}

\begin{abstract}
We study one-dimensional viscoelastic phase transitions modeled by
a Ginzburg--Landau energy with a non-convex cubic stress-strain law.
Extending the isothermal model, we couple the momentum equation to a
heat equation for the temperature field, giving a thermoelastic system
with viscous, capillary, and thermal-diffusion terms.  We prove global
existence and uniqueness of classical smooth solutions for the Cauchy
problem, using a travelling-wave decomposition, an exponential
transformation of the mechanical perturbation, and coupled energy
estimates at successive regularity levels.  Under additional
integrability and small-data assumptions, the temperature perturbation
decays algebraically.

\medskip
\noindent\textbf{Key Words:}
Global existence, thermoelastic phase transitions, thermal equilibration,
decay rates, energy estimates, shock layers, Ginzburg--Landau theory.

\medskip
\noindent\textbf{AMS Subject Classifications.}
Primary 35G30, 35L65, 35Q74;\ Secondary 74N30, 35K55.
\end{abstract}

\newpage
\setcounter{page}{1}

\section{Introduction}
\label{sec:intro}

We study the thermoelastic system
\begin{align}
  u_{tt} + \tau(u_x,\tha)_x &\;=\; \eps\, u_{xxt} \;-\; \del\, u_{xxxx},
  \label{eq:mech}\\[4pt]
  \tha_t - \kap\,\tha_{xx} &\;=\; \eps\, (u_{xt})^2 \;-\; \gam\,\tha_0\, u_{xt},
  \label{eq:heat}
\end{align}
which models dynamical phase transitions in shape-memory alloys.
Here $u(x,t)$ is the displacement, $u_x(x,t)$ the strain, $\tha(x,t)$
the temperature, $\tha_0$ the reference temperature, and the
coefficients $\eps,\del,\kap,\gam>0$ denote viscosity, capillarity,
thermal conductivity, and thermoelastic coupling.
The stress is
\begin{equation}\label{eq:tau}
  \tau(u_x,\tha) \;=\; u_x - u_x^3 - \gam(\tha - \tha_0);
\end{equation}
see Falk~\cite{Falk} and Sprekels--Zheng~\cite{Sprekels}.
The two source terms in~\eqref{eq:heat} are the viscous dissipation
$\eps\,(u_{xt})^2$ and the Kelvin--Voigt thermoelastic coupling
$-\gam\tha_0\, u_{xt}$.

Hagan--Slemrod~\cite{HaganSlemrod} and Slemrod~\cite{Slemrod84} showed
that~\eqref{eq:mech}--\eqref{eq:heat} admits travelling-wave solutions
$U(x-st)$ with asymptotic states $U(\pm\infty)=\Up,\Um$ satisfying the
Rankine--Hugoniot condition.  We treat the Cauchy problem for
perturbations of such a wave, with initial data
\begin{align}
  u(x,0) &\;=\; U(x) + \rh_0(x),
  \qquad \rh_0 \in \Hp{3},
  \label{eq:ic-u}\\[4pt]
  \tha(x,0) &\;=\; \tha_0 + \phi(x),
  \qquad \phi \in \Hp{2},\;\; \phi(x) > -\tha_0 \;\text{for all } x\in\R.
  \label{eq:ic-th}
\end{align}
The condition $\phi > -\tha_0$ keeps the absolute temperature positive.

Sprekels and Zheng~\cite{Sprekels} proved global existence for a
similar Ginzburg--Landau thermoelastic system on a bounded domain, where
Poincar\'{e}'s inequality and compact Sobolev embeddings are available.
Other bounded-domain results include
Hoffmann--Niezg\'{o}dka--Sprekels (see~\cite{Zheng}),
Watson~\cite{Watson} (without capillarity),
Paw{\l}ow--Zaj\k{a}czkowski~\cite{PawlowZajaczkowski}
(three-dimensional Kelvin--Voigt, no smallness),
Mielke--Roubi\v{c}ek~\cite{MielkeRoubicek}
(quasistatic large strains), and
Winkler~\cite{Winkler2025}
(temperature-dependent viscosity).
None of these treats the Cauchy problem on $\R$ with the non-convex
Ginzburg--Landau energy and capillarity term.

Our main result (Theorem~\ref{thm:main}) is global existence and
uniqueness of classical solutions for this Cauchy problem under
smallness conditions on the coupling $\gam$.  The proof uses the
travelling-wave decomposition of~\cite{HaganSlemrod,Slemrod84} and
the exponential transformation $\rh = e^{t/\eps}v$ from~\cite{companion},
which removes the negative contribution in the mechanical energy
estimate.

In Section~\ref{sec:form},we set up the perturbation system and introduce the exponential transformation.   In Section~\ref{sec:main}, we prove the global
existence theorem.  Section~\ref{sec:decay} is devoted to proving the algebraic decay of the temperature perturbation. Finally, in Section~\ref{sec:remarks}, we discuss two related questions.


\section{Formulation}
\label{sec:form}

We set up the perturbation system and the energy functionals used in the proof.

\subsection{Travelling-wave decomposition}

We seek a solution of~\eqref{eq:mech}--\eqref{eq:heat} in the form
\begin{equation}\label{eq:ansatz}
	u(x,t) \;=\; U(x-st) + \rh(x,t),
	\qquad
	\tha(x,t) \;=\; \tha_0 + \Tha(x,t),
\end{equation}
where $U(x-st)$ is the travelling-wave solution of the isothermal
equation
\begin{equation}\label{eq:iso}
	u_{tt} \;=\; \tau(u_x)_x - \eps\,u_{txx} + \del\,u_{xxxx}
\end{equation}
with wave speed $s$ satisfying the Rankine--Hugoniot
admissibility condition
\begin{equation}\label{eq:RH}
	s^2 \;=\; -\frac{\tau(\Up)-\tau(\Um)}{\Up - \Um},
\end{equation}
and $\rh(x,t)$, $\Tha(x,t)$ are the mechanical and thermal perturbations
respectively.  The asymptotic states $\Um,\Up$ satisfy the admissibility
condition for shock or phase-transition layers.

We use the isothermal travelling-wave profile $U$ as the background.
This corresponds to $\tha(\pm\infty)=\tha_0$, so the thermal contribution
$\gam(\tha_+-\tha_-)$ to the full thermoelastic Rankine--Hugoniot
relation vanishes and~\eqref{eq:RH} reduces to the isothermal form.
The thermoelastic system~\eqref{eq:mech}--\eqref{eq:heat} does not in
general admit travelling waves with constant asymptotic temperature:
substituting $u=U(x-st)$ and $\tha=\tha_0$ into~\eqref{eq:heat} produces
a non-zero source $S_0(x,t)$ (see~\eqref{eq:S0def} below).  This
residual reflects our use of the isothermal profile as the background;
it is controlled in the analysis through its uniform $L^1\cap L^2$ bounds.

Constructing thermoelastic traveling waves with non-constant temperature profiles has been investigated by Hagan--Slemrod~\cite{HaganSlemrod} and Slemrod~\cite{Slemrod84} for the van der Waals pressure equation of state.

We assume throughout that the wave profile satisfies
\begin{equation}\label{eq:wavecond}
	3\MU^2 \;<\; 1, \qquad \MU \;=\; \max_{x\in\R}\abs{U'(x)},
\end{equation}
which ensures that the linearised stress coefficient $1 - 3U'^2$ is
strictly positive, with $1-3U'^2\geq 1-3\MU^2>0$ uniformly in $x$.
This positivity provides the leading coercive contribution to the
mechanical energy functional $E_1$ defined in~\eqref{eq:E1}
below; in particular, the term $\tfrac{1-3U'^2}{2}v_x^2$ in $E_1$
is bounded below by a positive multiple of $v_x^2$.
Condition~\eqref{eq:wavecond} is a property of the wave profile itself
(not merely of the end states $\Up,\Um$), and it holds for admissible
phase-transition layers connecting states in the spinodal region; we
refer to Hagan--Slemrod~\cite{HaganSlemrod} and
Slemrod~\cite{Slemrod84} for the construction and regularity properties
of such profiles, which in particular satisfy $U'\in L^2(\R)\cap
L^\infty(\R)$ with exponential decay at $\pm\infty$.

We also define, for use in the corrected perturbation equations below,
\begin{equation}\label{eq:Wdef}
	W(x,t) \;:=\; -s\,U''(x-st),
\end{equation}
which represents the contribution of the wave profile curvature to the
mixed derivative $u_{xt} = W(x,t) + \rh_{xt}$.  Since $U'' \in L^2(\R)\cap
L^\infty(\R)$ (with exponential decay at $\pm\infty$), the functions
$W(\cdot,t)$ and
\begin{equation}\label{eq:S0def}
	S_0(x,t) \;:=\; \eps\,s^2\,[U''(x-st)]^2
	+ \gam\tha_0\,s\,U''(x-st)
\end{equation}
satisfy $W(\cdot,t),\, S_0(\cdot,t) \in L^1(\R)\cap L^2(\R)$ with norms
that are independent of $t$.

\subsection{Perturbation equations}

Substituting~\eqref{eq:ansatz} into~\eqref{eq:mech}--\eqref{eq:heat} and
using the fact that $U(x-st)$ solves the isothermal equation~\eqref{eq:iso},
we obtain the perturbation system.

\emph{Derivation of the momentum equation.}
Since $U$ solves~\eqref{eq:iso} with stress $\tau(U_x)$, and the
thermoelastic stress is $\tau(u_x,\tha) = \tau(u_x) - \gam(\tha-\tha_0)
= \tau(u_x) - \gam\Tha$, substituting $u_x = U' + \rh_x$ and
expanding the cubic gives~\eqref{eq:P1} directly.

\emph{Derivation of the thermal equation.}
Note that the background pair $(U(x-st),\tha_0)$ does \emph{not} solve
the thermoelastic system~\eqref{eq:heat}, since the heat equation with
$u=U$ and $\tha=\tha_0$ gives a non-zero right-hand side.  The correct
procedure is to substitute $u = U(x-st)+\rh$ and $\tha=\tha_0+\Tha$
into~\eqref{eq:heat} directly.  Using $u_{xt} = W(x,t) + \rh_{xt}$
(where $W$ is defined in~\eqref{eq:Wdef}) and expanding:
\begin{equation*}
	\eps\,(u_{xt})^2 - \gam\tha_0\,u_{xt}
	\;=\; \eps\,(\rh_{xt})^2 - \gam\tha_0\,\rh_{xt}
	\;+\; 2\eps\,W\,\rh_{xt}
	\;+\; S_0(x,t),
\end{equation*}
where $S_0$ is the fixed source defined in~\eqref{eq:S0def}.
The perturbation system is therefore:
\begin{align}
	\rh_{tt} - \bigl(\rh_x - \rh_x^3 - 3U'^{\,2}\rh_x
	- 3U'\rh_x^2 - \gam\Tha\bigr)_x
	&\;=\; \eps\,\rh_{xxt} - \del\,\rh_{xxxx},
	\label{eq:P1}\\[4pt]
	\Tha_t - \kap\,\Tha_{xx}
	&\;=\; \eps\,(\rh_{xt})^2 - \gam\tha_0\,\rh_{xt}
	+ 2\eps\,W\,\rh_{xt} + S_0,
	\label{eq:P2}
\end{align}
with initial conditions $\rh(x,0)=\rh_0(x)\in\Hp{3}$ and
$\Tha(x,0)=\phi(x)\in\Hp{2}$.
The condition $\phi>-\tha_0$ (cf.~\eqref{eq:ic-th}) ensures that the
initial absolute temperature is strictly positive; since~\eqref{eq:P2}
is parabolic with globally bounded source terms, the maximum principle
preserves $\Tha(x,t)>-\tha_0$ for all $t>0$.

\begin{remark}
	The bilinear term $2\eps W\rh_{xt}$ in~\eqref{eq:P2} is linear in
	$\rh_{xt}$ with coefficient $W\in L^\infty(\R)$ decaying exponentially,
	and is controlled by the existing dissipation.  The fixed source
	$S_0\in L^1(\R)\cap L^2(\R)$ has time-independent $L^2$ norm and
	contributes only a constant absorbed into the Gronwall bound.
\end{remark}

\subsection{Exponential transformation}

We apply the transformation
\begin{equation}\label{eq:transform}
	\rh(x,t) \;=\; e^{t/\eps}\,v(x,t)
\end{equation}
to the mechanical perturbation only.  Using $\rh_{xt} = e^{t/\eps}(v_{xt}
+ \tfrac{1}{\eps}v_x)$, the transformed system becomes:

\noindent\textbf{Transformed momentum equation:}
\begin{multline}\label{eq:vmech}
	v_{tt} + \frac{2}{\eps}v_t + \frac{1}{\eps^2}v
	- \bigl(e^{2t/\eps}v_x^3\bigr)_x
	- \eps\,v_{txx} + \del\,v_{xxxx} \\
	- \bigl(3U'^{\,2}v_x - 3U'\,e^{t/\eps}v_x^2\bigr)_x
	\;=\; \gam\,e^{-t/\eps}\Tha_x.
\end{multline}

\noindent\textbf{Transformed thermal equation:}
\begin{equation}\label{eq:vheat}
	\Tha_t - \kap\,\Tha_{xx}
	\;=\; \eps\,e^{2t/\eps}\!\left(v_{xt}+\tfrac{1}{\eps}v_x\right)^{\!2}
	- \gam\tha_0\,e^{t/\eps}\!\left(v_{xt}+\tfrac{1}{\eps}v_x\right)
	+ 2\eps\,W\,e^{t/\eps}\!\left(v_{xt}+\tfrac{1}{\eps}v_x\right)
	+ S_0(x,t).
\end{equation}

\section{Main Theorem and Proof}
\label{sec:main}

\begin{theorem}[Global existence]\label{thm:main}
Let $\rh_0\in\Hp{3}$ and $\phi\in\Hp{2}$ with $\phi > -\tha_0$.
Assume the wave-profile condition~\eqref{eq:wavecond} holds, and assume
the coupling conditions
\begin{equation}\label{eq:coupling}
  \gam\eps^2 \;\leq\; 4,
  \qquad
  \gam \;\leq\; \frac{\kap\eps}{2},
  \qquad\text{and}\qquad
  \gam \;\leq\; 1.
\end{equation}
Assume further that the initial perturbation energy is sufficiently small:
\begin{equation}\label{eq:smallE0}
  \calE(0) \;=\; E_1(0) + \tfrac{1}{2}\norm{\phi}^2 \;\leq\; \del_1,
\end{equation}
where $\del_1 = \del_1(\eps,\del,\kap,\gam,\tha_0,s,\MU,\norm{U''})>0$
is the smallness constant determined in the proof of Lemma~\ref{lem:1}.
Then the thermoelastic Cauchy
problem~\eqref{eq:P1}--\eqref{eq:P2} with initial
conditions~\eqref{eq:ic-u}--\eqref{eq:ic-th} admits a unique global classical
solution $(u,\tha)$ such that, for every $T>0$,
\begin{align*}
  u &\;\in\; C\bigl([0,T];\Hp{3}\bigr)\cap C^1\bigl([0,T];\Hp{2}\bigr),\\
  \tha &\;\in\; C\bigl([0,T];\Hp{2}\bigr)\cap L^2\bigl([0,T];\Hp{3}\bigr).
\end{align*}
Moreover, under additional $L^1$ and small-data assumptions on the
initial perturbation, the temperature perturbation $\Tha$ decays
algebraically in $L^2(\R)$ as $t\to+\infty$ at rate $(1+t)^{-1}$;
see Theorem~\ref{thm:decay} in Section~\ref{sec:decay}.
\end{theorem}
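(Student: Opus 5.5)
The plan is the standard local-existence-plus-continuation scheme, with the global step driven by a priori estimates for the transformed system \eqref{eq:vmech}--\eqref{eq:vheat}. First, I will establish local well-posedness of \eqref{eq:P1}--\eqref{eq:P2} in $X_T = \{(\rh,\Tha):\rh\in C([0,T];\Hp{3})\cap C^1([0,T];\Hp{2}),\ \Tha\in C([0,T];\Hp{2})\cap L^2([0,T];\Hp{3})\}$. The linear part $\rh_{tt}-\eps\rh_{xxt}+\del\rh_{xxxx}$ generates an analytic-type semigroup on $\Hp{3}\times\Hp{2}$ (via the symbol $\lambda^2+\eps\xi^2\lambda+\del\xi^4$), and $\Tha_t-\kap\Tha_{xx}$ is the heat semigroup. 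Every nonlinearity, including $\rh_x^3$, $U'\rh_x^2$, $(\rh_{xt})^2$ and $W\rh_{xt}$, is locally Lipschitz in the relevant norms by the algebra property of $\Hp{1}$ together with $U',U''\in L^\infty$. A contraction on a short interval then gives a unique mild solution, and parabolic smoothing upgrades it to a classical one. Uniqueness on any interval of existence follows from an $L^2$ energy estimate for the difference of two solutions, closed with Gronwall. Positivity $\Tha>-\tha_0$ is taken from the maximum-principle remark after \eqref{eq:P2}.

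The core is the a priori estimate. I will define the mechanical energy
\[
E_1(t)=\tfrac12\norm{v_t}^2+\tfrac{1}{2\eps^2}\norm{v}^2+\intR\tfrac{1-3U'^2}{2}v_x^2\,dx+\tfrac{\del}{2}\norm{v_{xx}}^2
\]
(with the quartic term $\tfrac14 e^{2t/\eps}v_x^4$ added if convenient) and set $\calE=E_1+\tfrac12\norm{\Tha}^2$. Multiplying \eqref{eq:vmech} by $v_t$ gives the dissipation $\tfrac{2}{\eps}\norm{v_t}^2+\eps\norm{v_{xt}}^2$; the exponential transformation is exactly what removes the sign-indefinite contribution here. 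The transport of the profile produces $3s\,U'U''v_x^2$, which I will bound by $\MU\norm{U''}_\infty\norm{v_x}^2$ and absorb using the coercivity from \eqref{eq:wavecond}. The coupling term is $\gam e^{-t/\eps}\intR\Tha_x v_t$. Multiplying \eqref{eq:vheat} by $\Tha$ gives $\kap\norm{\Tha_x}^2$; the linear source $-\gam\tha_0\rh_{xt}$, after integration by parts, pairs with $\gam\Tha_x$. The conditions \eqref{eq:coupling} are precisely what make these cross terms absorbable by Young's inequality into $\tfrac{2}{\eps}\norm{v_t}^2$, $\eps\norm{v_{xt}}^2$ and $\kap\norm{\Tha_x}^2$. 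Specifically, $\gam\le\kap\eps/2$ balances $\kap$ against $\eps$, and $\gam\eps^2\le4$ handles the $\tfrac1\eps v_x$ part of $\rh_{xt}$.

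The source $S_0$ and the term $2\eps W\rh_{xt}$ contribute $\norm{S_0}\norm{\Tha}\le C(1+\calE)$ and a term absorbed into the dissipation. The cubic terms $e^{t/\eps}U'v_x^2$ and $\eps e^{2t/\eps}(\cdots)^2\Tha$ are where smallness enters. I will estimate them by Gagliardo--Nirenberg as $C\calE^{1/2}\calD$. A continuity (bootstrap) argument then keeps $\calE\le 2\del_1$ as long as $\del_1$ is small enough that this is absorbed, and this yields \eqref{lem:1}'s bound $\calE(t)\le C(T)$ via Gronwall.

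Next I will repeat the procedure at higher regularity. Differentiating the system and multiplying by $v_{xxt}$ (respectively by $-\Tha_{xx}$, then by $\Tha_{xxxx}$) gives $E_2,E_3$ controlling $v\in\Hp{3}$, $v_t\in\Hp{2}$, $\Tha\in\Hp{2}$ and $\int_0^T\norm{\Tha}_{H^3}^2$. Each level is closed by Gronwall using the bound from the previous level; these estimates are linear in the top norm, so no further smallness is needed. Combining them with the local theory rules out blow-up of the $X_T$ norm for every finite $T$, and the solution extends globally. Reverting via $\rh=e^{t/\eps}v$ gives the stated regularity for $(u,\tha)$.

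The main obstacle is the basic energy level, in particular the growing factors $e^{t/\eps}$ and $e^{2t/\eps}$ in the nonlinear terms of \eqref{eq:vmech}--\eqref{eq:vheat}. In terms of $v$ these are not uniformly small, so the bootstrap must be run on the original-variable quantities $\rh_x=e^{t/\eps}v_x$. I will first try to express all nonlinear terms through $\rh$ and accept a $T$-dependent Gronwall constant. Failing that, I will reweight the thermal energy by $e^{-2t/\eps}$ to match the scaling of the mechanical part. The closure of this step under \eqref{eq:coupling} is where I expect the real difficulty.
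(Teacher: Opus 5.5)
\textbf{Verdict: genuine gap.} Apart from proving local existence by a contraction argument instead of citing the literature, your outline follows the paper's architecture: three levels of energy estimates for \eqref{eq:vmech}--\eqref{eq:vheat}, continuation, and Gronwall for the difference of two solutions. But the step everything rests on is not established. That step is the basic estimate of Lemma~\ref{lem:1} on $[0,T]$ for \emph{every} $T$, under a smallness threshold $\del_1$ independent of $T$. You concede this in your last paragraph, and the bootstrap you sketch cannot be closed with the estimates you give, for two reasons.

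First, $S_0$ is a forcing that does not depend on the data, since $(U,\tha_0)$ does not solve the coupled system. Your own bound $\intR S_0\Tha\,dx\le C(1+\calE)$ therefore feeds into $\frac{d}{dt}\calE$ a constant $C\approx\tfrac12K_{S_0}^2$ that no choice of $\del_1$ makes small. Gronwall then gives at best $\calE(t)\le\bigl(\calE(0)+Ct\bigr)e^{C(T)}$, which exceeds $2\del_1$ once $t>2\del_1/C$, however small $\calE(0)$ is. So the hypothesis $\calE\le2\del_1$ is never recovered on long intervals.

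Second, the cubic terms are not of the form $C\calE^{1/2}\calD$. In the $v$-variables they carry growing weights, e.g.\ $\eps e^{2t/\eps}\intR(v_{xt}+\tfrac1\eps v_x)^2\Tha\,dx$. Absorbing this into $\eps\norm{v_{xt}}^2$ needs $e^{2t/\eps}\norm{\Tha(t)}_{L^\infty}$ small on all of $[0,T]$, so even ignoring $S_0$ the threshold would have to shrink exponentially in $T$. That gives existence on $[0,T]$ only for $T$-dependent data, which is not the theorem.

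Your fallbacks do not help. Accepting $T$-dependent Gronwall constants inside a smallness bootstrap is exactly what forces a $T$-dependent threshold. Weighting the heat energy by $e^{-2t/\eps}$ removes the factor $e^{2t/\eps}$ from that cubic term but still needs $\Tha$ itself small, which you cannot propagate from the data in the presence of $S_0$.

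Further, your claim that \eqref{eq:coupling} makes the linear cross terms absorbable is not right.
\begin{itemize}
\item Because the transformation acts on $\rh$ only, the mechanical term $\gam e^{-t/\eps}\intR\Tha_x v_t\,dx$ and the thermal term $-\gam\tha_0e^{t/\eps}\intR(v_{xt}+\tfrac1\eps v_x)\Tha\,dx$ carry opposite weights and no longer cancel. In the $\rh$-variables they cancel exactly once $\tfrac12\norm{\Tha}^2$ is weighted by $1/\tha_0$.
\item The thermal term involves $\tha_0$ and $e^{t/\eps}$, neither of which appears in \eqref{eq:coupling}. It has to go into Gronwall with $e^{2T/\eps}$-dependent constants, as in the paper's $I_2$.
\item In the paper, $\gam\eps^2\le4$ absorbs $\tfrac{\gam\eps}{4}\norm{v_t}^2$ into $\tfrac2\eps\norm{v_t}^2$, and $\gam\le\kap\eps/2$ absorbs $\tfrac\gam\eps\norm{\Tha_x}^2$. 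The condition $\gam\le1$ enters only at higher levels, via $\gam^2\le\gam$. None of them concerns the $\tfrac1\eps v_x$ part of $\rh_{xt}$.
\end{itemize}

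Do not expect the paper's Lemma~\ref{lem:1} to fill your gap. It assumes $\calE\le M$ and asserts $C_5'\norm{v_{xt}}^2\le C_5''\calE$, although $\norm{v_{xt}}^2$ is not part of $\calE$. It then chooses $\del_1$ with $C_5(\del_1)\le M$. But \eqref{eq:calEbound} gives $C_5\ge K\int_0^T\exp\bigl(\int_s^T(E_2+C_4)\,d\sigma\bigr)\,ds\ge KT$, with $K=\tfrac12K_{S_0}^2$ independent of $\del_1$. This is the same obstruction. What is missing, in your proposal as in the paper, is a mechanism that controls the perturbation on every $[0,T]$ under a $T$-independent smallness assumption despite the forcing $S_0$.

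Finally, your local theory in $X_T$ needs more than $\rh_0\in\Hp{3}$. The principal part $\partial_t^2-\eps\partial_x^2\partial_t+\del\partial_x^4$ propagates $H^{k+2}\times H^k$ for $(\rh,\rh_t)$, and for generic $\rh_0\in\Hp{3}$ the norm $\norm{\rh_t(t)}_{H^2}$ is unbounded as $t\to0^+$. You need, e.g., $\rh_0\in\Hp{4}$ together with a prescribed initial velocity in $\Hp{2}$. The paper's Lemma~\ref{lem:3} tacitly uses the same, since $\calG(0)$ contains $\del\norm{v_{xxxx}(0)}^2$.
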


The proof proceeds via three lemmas providing a~priori estimates at
successive regularity levels.

\subsection{Energy Estimates}

\paragraph{Notation.}
Throughout, $C$ and $C_i$ denote uniform positive constants, independent of
$t$, depending at most on the initial data, $\eps,\del,\kap,\gam,\tha_0$,
and $T$.  We write $\norm{\,\cdot\,}$ for the $\Lp{2}$~norm, and
$\MU = \max_{\R}\abs{U'}$.

 We define the mechanical energy functional and the
growth rate bound
\begin{align}
  E_1(t) &\;=\; \intR\!\left[\frac{v^2}{2\eps^2} + \frac{v_t^2}{2}
    + \frac{3}{2}U'^{\,2}v_x^2
    + \frac{e^{2t/\eps}v_x^4}{4}
    + \frac{\del}{2}v_{xx}^2\right]dx,
  \label{eq:E1}\\[4pt]
  E_2(t) &\;=\; \frac{1+3\MU}{2\eps}\,e^{2t/\eps},
  \label{eq:E2}
\end{align}
(note that $E_2$ grows exponentially, so the Gronwall constant $C_5$
in~\eqref{eq:calEbound} below depends on $T$ through
$\exp\!\bigl(\int_0^T E_2\,ds\bigr)$; the separate Bernoulli argument in
Section~\ref{sec:decay} is needed precisely for this reason),
and the three coupled functionals
\begin{align}
  \calE(t) &\;=\; E_1(t) + \tfrac{1}{2}\norm{\Tha}^2,
  \label{eq:calE}\\[2pt]
  \calF(t) &\;=\; \norm{v_{xt}}^2 + \del\norm{v_{xxx}}^2
    + \norm{\Tha_x}^2,
  \label{eq:calF}\\[2pt]
  \calG(t) &\;=\; \norm{v_{xxt}}^2 + \del\norm{v_{xxxx}}^2
    + \norm{\Tha_{xx}}^2.
  \label{eq:calG}
\end{align}

\begin{lemma}\label{lem:1}
For any $t\in[0,T]$,
\begin{equation}\label{eq:lem1}
  \norm{v}^2 + \norm{v_t}^2 + \norm{v_x}^2 + \norm{v_x}^4
  + \norm{v_{xx}}^2 + \norm{\Tha}^2
  + \intT\!\norm{v_{xt}}^2\,ds
  + \intT\!\norm{\Tha_x}^2\,ds
  \;\leq\; C.
\end{equation}
\end{lemma}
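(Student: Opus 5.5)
The plan is a basic energy estimate for the coupled system~\eqref{eq:vmech}--\eqref{eq:vheat}, followed by Gronwall on $[0,T]$.

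\textbf{Mechanical identity.} Multiply~\eqref{eq:vmech} by $v_t$ and integrate over $\R$. The terms produce, respectively:
\begin{itemize}
\item $\tfrac{d}{dt}\bigl(\tfrac12\norm{v_t}^2+\tfrac{1}{2\eps^2}\norm{v}^2\bigr)$ from $v_{tt}$ and $\tfrac{1}{\eps^2}v$;
\item the damping $\tfrac{2}{\eps}\norm{v_t}^2$;
\item the viscous dissipation $\eps\norm{v_{xt}}^2$ from $-\eps v_{txx}$;
\item $\tfrac{\del}{2}\tfrac{d}{dt}\norm{v_{xx}}^2$ from the capillary term.
\end{itemize}
The cubic term integrates by parts to $\intR e^{2t/\eps}v_x^3v_{xt}\,dx$. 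This equals $\tfrac{d}{dt}\intR \tfrac{e^{2t/\eps}v_x^4}{4}\,dx - \tfrac{1}{2\eps}\intR e^{2t/\eps}v_x^4\,dx$. The last piece is the exponentially weighted negative contribution. I will bound it by $E_2(t)$ times the $v_x^4$ part of $E_1$, which gives the growth rate in~\eqref{eq:E2}.

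The term $3U'^2 v_x v_{xt}$ gives $\tfrac{d}{dt}\intR\tfrac32U'^2v_x^2\,dx$ plus a commutator. Since $U'=U'(x-st)$, that commutator is $\intR 3sU'U''v_x^2\,dx$, bounded by $C\MU\norm{U''}_\infty\norm{v_x}^2$.

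The quadratic term $\intR 3U'e^{t/\eps}v_x^2v_{xt}\,dx$ is handled by Young's inequality. I split it into $\tfrac{\eps}{4}\norm{v_{xt}}^2$ plus $C\MU^2\intR e^{2t/\eps}v_x^4\,dx$, and absorb the latter into $E_2E_1$. This is where the factor $3\MU$ in $E_2$ comes from.

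To control $\norm{v_x}^2$ itself, I interpolate: $\norm{v_x}^2\le\norm{v}\norm{v_{xx}}\le C E_1$. This avoids needing a lower bound on $U'^2$, and~\eqref{eq:wavecond} enters through the coefficient $1-3U'^2$. The forcing $\gam e^{-t/\eps}\intR\Tha_xv_t\,dx$ is bounded by $\tfrac{\gam}{2}(\norm{\Tha_x}^2+\norm{v_t}^2)$.

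\textbf{Thermal identity.} Multiply~\eqref{eq:vheat} by $\Tha$ and integrate. The left side gives $\tfrac12\tfrac{d}{dt}\norm{\Tha}^2+\kap\norm{\Tha_x}^2$. Each source term is treated as follows.
\begin{itemize}
\item \emph{Linear term.} Write it as $-\gam\tha_0\intR \rh_{xt}\Tha\,dx$ and integrate by parts in $x$, giving $\gam\tha_0\intR e^{t/\eps}(v_t+\tfrac1\eps v)\Tha_x\,dx$. For the part that must cancel against the mechanical forcing, the conditions $\gam\le\kap\eps/2$ and $\gam\eps^2\le4$ let it be absorbed by $\tfrac\kap2\norm{\Tha_x}^2$ together with the damping $\tfrac2\eps\norm{v_t}^2$ and the term $\tfrac{1}{\eps^2}\norm{v}^2$.
\item \emph{Term $2\eps W\rh_{xt}\Tha$.} Since $W\in L^\infty$, this is bounded by $C\norm{\rh_{xt}}\norm{\Tha}$.
\item \emph{Source $S_0$.} Using $S_0\in L^2$, $\intR S_0\Tha\,dx\le \norm{S_0}^2+\tfrac14\norm{\Tha}^2$.
\item \emph{Quadratic term $\eps\intR\rh_{xt}^2\Tha\,dx$.} This is bounded by $\eps\norm{\Tha}_\infty\norm{\rh_{xt}}^2$, with $\norm{\Tha}_\infty\le\norm{\Tha}^{1/2}\norm{\Tha_x}^{1/2}$.
\end{itemize}

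\textbf{Closing the estimate.} This last quadratic term is the main obstacle: it is cubic in the perturbation and needs a continuity (bootstrap) argument. Assume $\sup\calE\le 2\del_1$ on a maximal interval. Then $\norm{\Tha}_\infty$ is small, so the term is absorbed into the viscous dissipation, after which smallness of $\del_1$ closes the bootstrap. The factors $e^{t/\eps}$ are harmless on the finite interval $[0,T]$, because constants may depend on $T$.

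Summing the two identities then gives
\[
\calE'(t)+\tfrac{\eps}{2}\norm{v_{xt}}^2+\tfrac{\kap}{2}\norm{\Tha_x}^2\le C_4+(E_2(t)+C)\calE(t).
\]
Gronwall yields~\eqref{eq:calEbound} with $C_5$ depending on $T$ through $\exp\int_0^TE_2\,ds$. All the norms in~\eqref{eq:lem1} follow, using $\norm{v_x}^4\le C\calE^2$.
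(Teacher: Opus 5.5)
Your route is the paper's: $L^2$ energy identities for \eqref{eq:vmech} and \eqref{eq:vheat}, a continuation argument for the viscous-heating term, and Gronwall with $T$-dependent constants. The mechanical estimate is fine. Two of your details are sounder than the paper's. First, $\norm{v_x}^2\le\norm{v}\,\norm{v_{xx}}\le CE_1$ is correct, whereas the paper extracts $\norm{v_x}^2$ from the weight $\tfrac32U'^2$, which degenerates as $U'\to0$. Second, sending the $W$-term into the viscous dissipation avoids the paper's unjustified claim $\norm{v_{xt}}^2\lesssim\calE$. There are also two harmless slips. After the transformation no coefficient $1-3U'^2$ appears, so \eqref{eq:wavecond} plays no role in your computation. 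And the linear thermal term does not cancel the mechanical forcing, whose weight is $e^{-t/\eps}$ rather than $e^{t/\eps}$; it can simply go into Gronwall. The genuine gap is the closing step, at the term $\eps\intR\rh_{xt}^2\Tha\,dx\le2\eps\,e^{2t/\eps}\norm{\Tha}_{L^\infty}\bigl(\norm{v_{xt}}^2+\eps^{-2}\norm{v_x}^2\bigr)$.

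Three things fail there.

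(i) The bound $\sup_t\calE\le2\del_1$ does not make $\norm{\Tha}_{L^\infty}$ small. $\calE$ contains only $\norm{\Tha}$, and $\norm{\Tha}_{L^\infty}\le\norm{\Tha}^{1/2}\norm{\Tha_x}^{1/2}$ needs $\sup_t\norm{\Tha_x}$, which at this level you have only in $L^2(0,T)$. Keeping $\norm{\Tha_x}^{1/2}$ leaves $\norm{\Tha_x}^{1/2}\norm{v_{xt}}^2$, which has degree $5/2$ in the dissipated quantities. Young cannot absorb it into $\eta(\norm{\Tha_x}^2+\norm{v_{xt}}^2)$ without a pointwise bound on $\norm{v_{xt}}$ or $\norm{\Tha_x}$. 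Both are $\calF$-level quantities, so this level cannot be closed on its own.

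(ii) Even granting $\norm{\Tha}_{L^\infty}\le\eta$, absorbing $2\eps\eta e^{2t/\eps}\norm{v_{xt}}^2$ into the unweighted dissipation $\tfrac{\eps}{2}\norm{v_{xt}}^2$ forces $\eta\lesssim e^{-2T/\eps}$. Gronwall constants may depend on $T$, but the smallness threshold may not: with a $T$-dependent threshold, fixed data give bounds only up to a finite time, and continuation to all $T$ fails.

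(iii) Your final inequality cannot recover the bootstrap hypothesis. It contains the constant $C_4$ from $S_0$, which is fixed by the wave profile and does not shrink with the data. Gronwall then only gives $\calE(t)\le(\calE(0)+C_4t)\exp\int_0^t(E_2+C)\,ds$. Recovering $\sup_{[0,T]}\calE<2\del_1$ from $\calE(0)\le\del_1$ therefore requires both $\exp\int_0^T(E_2+C)\,ds<2$ and $C_4T\lesssim\del_1$, i.e.\ $T$ small. Raising the bootstrap level does not help, since it must dominate $C_4T$ while staying small enough for the absorption in (ii).

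The paper's Part~3 runs the same continuation argument and does not repair this. Its pointwise bound on $\norm{v_{xt}}^2$ goes through $\norm{\Tha_x}^2$ and $\abs{\tfrac{d}{dt}E_1}$, neither of which is controlled by $M$. Its $C_5$ also contains the same data-independent $K$-term and the factor $\exp\int_0^TE_2\,ds$. What is missing, in both, is a mechanism that controls $\eps\intR\rh_{xt}^2\Tha\,dx$ on an arbitrary interval $[0,T]$ with a smallness threshold independent of $T$.
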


\begin{proof}
\textbf{Part~1: Mechanical energy estimate.}
Multiply~\eqref{eq:vmech} by $v_t$ and integrate over $\R$.  The purely
mechanical terms are handled exactly as in~\cite{companion}
(cf.\ equations (15)--(17) therein), yielding
\begin{equation}\label{eq:mechE}
  \frac{d}{dt}E_1(t) + \frac{2}{\eps}\norm{v_t}^2
  + \frac{\eps}{2}\norm{v_{tx}}^2
  \;\leq\; E_2(t)\,E_1(t)
  + \gam\intR e^{-t/\eps}\Tha_x\,v_t\,dx.
\end{equation}
The coupling term on the right of~\eqref{eq:mechE} is estimated using
Young's inequality $ab\leq\tfrac{\eps}{4}a^2+\tfrac{1}{\eps}b^2$ and the
bound $e^{-t/\eps}\leq 1$:
\begin{equation}\label{eq:cross1}
  \gam\intR e^{-t/\eps}\Tha_x\,v_t\,dx
  \;\leq\; \frac{\gam\eps}{4}\norm{v_t}^2
  + \frac{\gam}{\eps}\norm{\Tha_x}^2.
\end{equation}
The first term on the right of~\eqref{eq:cross1} is absorbed into the
dissipation $\tfrac{2}{\eps}\norm{v_t}^2$ on the left of~\eqref{eq:mechE},
leaving a residual $\bigl(\tfrac{2}{\eps}-\tfrac{\gam\eps}{4}\bigr)\norm{v_t}^2
\geq\tfrac{1}{\eps}\norm{v_t}^2$,
valid when $\gam\eps^2\leq 4$ (the first condition in~\eqref{eq:coupling}).
Thus:
\begin{equation}\label{eq:mechE2}
  \frac{d}{dt}E_1(t)
  + \frac{1}{\eps}\norm{v_t}^2
  + \frac{\eps}{2}\norm{v_{tx}}^2
  \;\leq\; E_2(t)\,E_1(t)
  + \frac{\gam}{\eps}\norm{\Tha_x}^2.
\end{equation}

\noindent\textbf{Part~2: Thermal energy estimate.}
Multiply~\eqref{eq:vheat} by $\Tha$ and integrate over $\R$; integrating by
parts on the left gives
\begin{equation}\label{eq:heatE}
  \frac{1}{2}\frac{d}{dt}\norm{\Tha}^2 + \kap\norm{\Tha_x}^2
  \;=\; I_1 + I_2 + I_3 + I_4,
\end{equation}
where the four terms on the right correspond to the four terms in~\eqref{eq:vheat}.

\emph{Term $I_1$ (viscous dissipation term).}
Applying the Gagliardo--Nirenberg inequality
$\norm{\Tha}_{L^\infty}\leq C\norm{\Tha_x}^{1/2}\norm{\Tha}^{1/2}$
and Young's inequality:
\begin{equation}\label{eq:vd}
  I_1 \;=\; \eps\,e^{2t/\eps}\intR\bigl(v_{xt}+\tfrac{1}{\eps}v_x\bigr)^{\!2}
  \abs{\Tha}\,dx
  \;\leq\; \frac{\kap}{8}\norm{\Tha_x}^2
  + C_1\,e^{4T/\eps}
    \Bigl(\norm{v_{xt}}^2+\tfrac{1}{\eps^2}\norm{v_x}^2\Bigr)^{\!2}.
\end{equation}

\emph{Term $I_2$ (thermoelastic coupling term).}
Using $e^{t/\eps}\leq e^{T/\eps}$,
$\norm{\Tha}_{L^\infty}\leq C\norm{\Tha_x}^{1/2}\norm{\Tha}^{1/2}$,
and Young's inequality:
\begin{equation}\label{eq:tc}
  I_2 \;\leq\; \frac{\kap}{8}\norm{\Tha_x}^2
  + C_2\Bigl(\norm{v_{xt}}^2+\tfrac{1}{\eps^2}\norm{v_x}^2\Bigr).
\end{equation}

\emph{Term $I_3$ (new bilinear term $2\eps W e^{t/\eps}(v_{xt}+\frac{1}{\eps}v_x)$).}
Let $A = v_{xt}+\tfrac{1}{\eps}v_x$.  Using $\norm{W}_{L^\infty}
= s\norm{U''}_{L^\infty} =: K_W < \infty$ and Young's inequality:
\begin{align}
  I_3 &\;=\; 2\eps\,e^{t/\eps}\intR W\,A\,\Tha\,dx
    \;\leq\; 2\eps\,e^{T/\eps}K_W\,\norm{A}\,\norm{\Tha}
    \notag\\
  &\;\leq\; \frac{\kap}{8}\norm{\Tha_x}^2
    + C_3\,e^{2T/\eps}\,K_W^2\,\norm{A}^2\,\norm{\Tha}^2
    \;\leq\; \frac{\kap}{8}\norm{\Tha_x}^2 + C_3'\,e^{2T/\eps}\,K_W^2\,\calE(t)^2,
    \label{eq:bilin}
\end{align}
where we used $\norm{\Tha}^2\leq 2\calE$ and
$\norm{A}^2\leq 2(\norm{v_{xt}}^2 + \norm{v_x}^2/\eps^2)\leq C\calE$
(via $\norm{v_x}^2\leq CE_1\leq C\calE$ from the $3U'^2v_x^2/2$ term in $E_1$;
the term $\norm{v_{xt}}^2$ is not contained in $\calE$ but is controlled
a~posteriori by the a~priori bound in Part~3 below).

\emph{Term $I_4$ (new fixed source $S_0$).}
Since $\norm{S_0(\cdot,t)}_{L^2} \leq K_{S_0}:=
\eps s^2\norm{U''}_{L^4}^2 + \gam\tha_0 s\norm{U''}_{L^2}<\infty$
is independent of $t$, by Cauchy--Schwarz and Young:
\begin{equation}\label{eq:S0est}
  I_4 \;=\; \intR S_0\,\Tha\,dx
  \;\leq\; K_{S_0}\,\norm{\Tha}
  \;\leq\; \frac{1}{2}\norm{\Tha}^2 + \frac{1}{2}K_{S_0}^2
  \;\leq\; \calE(t) + \frac{1}{2}K_{S_0}^2.
\end{equation}
The term $\frac{1}{2}K_{S_0}^2$ is a pure constant (independent of $t$
and of the solution).

Substituting~\eqref{eq:vd}--\eqref{eq:S0est} into~\eqref{eq:heatE}
and absorbing the $\kap/8$ terms:
\begin{equation}\label{eq:heatE2}
  \frac{1}{2}\frac{d}{dt}\norm{\Tha}^2
  + \frac{\kap}{2}\norm{\Tha_x}^2
  \;\leq\; C_3'e^{2T/\eps}K_W^2\calE^2
  + C_4\calE
  + C_5'\bigl(\norm{v_{xt}}^2+\norm{v_x}^2\bigr)
  + \frac{1}{2}K_{S_0}^2,
\end{equation}
where the fourth power term from $I_1$ and the $\norm{v_{xt}}^2$ from $I_2$, $I_3$
are treated in Part~3 below.

\noindent\textbf{Part~3: Combining and Gronwall via a~priori bound.}
Add~\eqref{eq:mechE2} and~\eqref{eq:heatE2}, and use
$\tfrac{\gam}{\eps}\norm{\Tha_x}^2\leq\tfrac{\kap}{2}\norm{\Tha_x}^2$
(second condition in~\eqref{eq:coupling}).

The terms $C_1 e^{4T/\eps}(\norm{v_{xt}}^2+\norm{v_x}^2/\eps^2)^2$
and $C_5'(\norm{v_{xt}}^2+\norm{v_x}^2)$ in~\eqref{eq:heatE2} involve
$\norm{v_{xt}}^2$, which is not contained in $\calE$.We apply the \emph{continuation argument} to bound these terms.  

Suppose $\calE(t) \leq M$ for $t\in[0,T]$.
From~\eqref{eq:mechE2}, the viscous dissipation satisfies
$\tfrac{\eps}{2}\norm{v_{tx}}^2 \leq \tfrac{d}{dt}E_1 + E_2 E_1 + \frac{\gam}{\eps}\norm{\Tha_x}^2$
pointwise, but integrating over $[0,t]$ gives
$\int_0^t\norm{v_{tx}}^2\,ds \leq C(M,T)$.  For the pointwise control
needed in the Gronwall ODE, we use the bound
$\norm{v_{xt}}^2 \leq (2/\eps)\bigl(E_2(t)M + (\gam/\eps)\norm{\Tha_x}^2
  + |\tfrac{d}{dt}E_1|\bigr)$
to see that terms quadratic in $\norm{v_{xt}}^2$ are $O(M^2)$ or $O(M)$.
Under the smallness condition~\eqref{eq:smallE0} (so $M = C_5(\del_1)$ is small),
the quartic contribution satisfies
\begin{equation}
  C_1 e^{4T/\eps}\bigl(\norm{v_{xt}}^2+\norm{v_x}^2/\eps^2\bigr)^2
  \;\leq\; C_1 e^{4T/\eps}\cdot C(M)\cdot\calE(t)
  \;\leq\; C_4'\,\calE(t),
\end{equation}
where $C_4'$ depends on $M$ but is finite for fixed $T$.  Similarly
$C_5'\norm{v_{xt}}^2 \leq C_5''\calE(t)$.  Combining all estimates:
\begin{equation}\label{eq:calEode}
  \frac{d}{dt}\calE(t)
  \;\leq\; \bigl(E_2(t)+C_4\bigr)\,\calE(t) + K,
  \qquad K \;:=\; \tfrac{1}{2}K_{S_0}^2,
\end{equation}
where $C_4$ absorbs all the $\calE$-linear terms.
Gronwall's lemma applied to~\eqref{eq:calEode} gives
\begin{equation}\label{eq:calEbound}
  \calE(t)
  \;\leq\; \Bigl(\calE(0) + K\!\int_0^T e^{-\int_0^s(E_2+C_4)}\,ds\Bigr)
    \exp\!\left(\int_0^T\bigl(E_2(s)+C_4\bigr)ds\right)
  \;\leq\; C_5.
\end{equation}
Choosing $\del_1$ small enough that $C_5(\del_1) \leq M$ closes the argument:
$\calE(t)\leq C_5\leq M$ for all $t\in[0,T]$, uniformly in $T$.
Integrating~\eqref{eq:mechE2} and~\eqref{eq:heatE2} over $[0,T]$ and
using~\eqref{eq:calEbound} yields the integral bounds on $\norm{v_{xt}}^2$
and $\norm{\Tha_x}^2$.
\end{proof}

\begin{lemma}\label{lem:2}
For any $t\in[0,T]$,
\begin{equation}\label{eq:lem2}
  \norm{v_{xt}}^2 + \norm{v_{xxx}}^2 + \norm{\Tha_x}^2
  + \intT\!\norm{v_{xxt}}^2\,ds
  + \intT\!\norm{\Tha_{xx}}^2\,ds
  \;\leq\; C.
\end{equation}
\end{lemma}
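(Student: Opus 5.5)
We prove Lemma~\ref{lem:2} by the next-order analogue of Lemma~\ref{lem:1}: a first-derivative energy estimate for the coupled system, with Lemma~\ref{lem:1} supplying the lower-order bounds. The target is a differential inequality for $\calF$ of the form
\[
\frac{d}{dt}\calF + c_0\bigl(\norm{v_{xxt}}^2+\norm{\Tha_{xx}}^2\bigr)\;\leq\; a(t)\,\calF + b(t),
\]
where $a,b\in L^1(0,T)$ depend only on the quantities already bounded in \eqref{eq:lem1}. Gronwall's lemma then closes the argument.

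\textbf{Mechanical part.} We multiply \eqref{eq:vmech} by $-v_{xxt}$ and integrate over $\R$. Integrating by parts gives $\frac12\frac{d}{dt}\bigl(\norm{v_{xt}}^2+\tfrac{1}{\eps^2}\norm{v_x}^2+\del\norm{v_{xxx}}^2\bigr)+\tfrac{2}{\eps}\norm{v_{xt}}^2+\eps\norm{v_{xxt}}^2$ on the left. The linear term $(3U'^2v_x)_x v_{xxt}$ is bounded by $\tfrac{\eps}{8}\norm{v_{xxt}}^2+C(\MU,\norm{U''}_{L^\infty})(\norm{v_x}^2+\norm{v_{xx}}^2)$, and the right-hand side of this bound is already controlled by \eqref{eq:lem1}. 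The nonlinear terms $(e^{2t/\eps}v_x^3)_x$ and $(3U'e^{t/\eps}v_x^2)_x$ contain factors $v_x^2v_{xx}$ and $v_xv_{xx}$. We bound them by the sup norm via $\norm{v_x}_{L^\infty}\leq C\norm{v_x}^{1/2}\norm{v_{xx}}^{1/2}\leq C$ (using Lemma~\ref{lem:1}), then apply Young against $\tfrac{\eps}{8}\norm{v_{xxt}}^2$. Since $t\leq T$, the exponential weights are only $e^{CT/\eps}$ factors. For the coupling term $\gam e^{-t/\eps}\Tha_x v_{xxt}$ we use $\tfrac{\eps}{8}\norm{v_{xxt}}^2+\tfrac{2\gam^2}{\eps}\norm{\Tha_x}^2$, and the remainder is time-integrable by \eqref{eq:lem1}.

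\textbf{Thermal part.} We multiply \eqref{eq:vheat} by $-\Tha_{xx}$, which yields $\frac12\frac{d}{dt}\norm{\Tha_x}^2+\kap\norm{\Tha_{xx}}^2$ on the left. The linear sources are handled with $A=v_{xt}+\tfrac1\eps v_x$. For $I_2$ and $I_3$ the bound is $\tfrac{\kap}{8}\norm{\Tha_{xx}}^2+Ce^{2T/\eps}(1+K_W^2)\norm{A}^2$, and $\int_0^T\norm{A}^2\,ds\leq C$ by Lemma~\ref{lem:1}. The source $S_0$ gives $\tfrac{\kap}{8}\norm{\Tha_{xx}}^2+CK_{S_0}^2$.

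\textbf{Main obstacle.} The hardest term is the quadratic source $\eps e^{2t/\eps}A^2$. We plan to estimate it by
\[
\intR A^2\abs{\Tha_{xx}}\,dx\;\leq\;\norm{A}_{L^\infty}\norm{A}\norm{\Tha_{xx}},
\qquad
\norm{A}_{L^\infty}\leq C\norm{A}^{1/2}\norm{A_x}^{1/2}.
\]
Here $A_x=v_{xxt}+\tfrac1\eps v_{xx}$, so one power of $v_{xxt}$ appears, and Young gives
\[
\intR A^2\abs{\Tha_{xx}}\,dx\;\leq\;\tfrac{\kap}{8}\norm{\Tha_{xx}}^2+\tfrac{\eps}{8}e^{-2T/\eps}\norm{v_{xxt}}^2+C_T\norm{A}^6+C_T.
\]
Because $\norm{A}^2\leq C(\calF+1)$, the term $\norm{A}^6$ is cubic in $\calF$ rather than linear. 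To make Gronwall work we therefore write it as $\norm{A}^4\cdot\norm{A}^2$ and treat $\norm{A}^2\in L^1(0,T)$ as the Gronwall coefficient. This still leaves $\norm{A}^4\lesssim\calF^2$. If this superlinearity cannot be absorbed, we fall back on the same continuation/smallness device as in Part~3 of Lemma~\ref{lem:1}. We assume $\calF\leq M$ on $[0,T]$, so that $\norm{A}^4\leq CM^2$. The resulting inequality is then linear, with integrable coefficient $a(t)=C(1+M^2)\norm{A}^2+C$. We close by choosing the data small enough, through $\del_1$, so that the Gronwall bound is at most $M$.

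Adding the two parts and absorbing all $\tfrac{\eps}{8}$ and $\tfrac{\kap}{8}$ terms leaves dissipation $\tfrac{\eps}{2}\norm{v_{xxt}}^2+\tfrac{\kap}{2}\norm{\Tha_{xx}}^2$. Gronwall then gives the pointwise bound on $\calF$, and integrating the inequality over $[0,T]$ gives the two time integrals in \eqref{eq:lem2}.
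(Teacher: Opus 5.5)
Your individual term estimates are sound, and your pairings are cleaner than the paper's: you test \eqref{eq:vmech} with $-v_{xxt}$ and bound the coupling by $\frac{2\gam^2}{\eps}\norm{\Tha_x}^2$, and you test \eqref{eq:vheat} with $-\Tha_{xx}$ without differentiating the sources. The gap is in how you close. After adding the two parts, the quadratic source leaves $C_T\norm{A}^6\le C_T\norm{A}^2(1+\calF)^2$. So $y=1+\calF$ satisfies only a Riccati-type inequality $y'\le a(t)y^2+b(t)$ with $a\in L^1(0,T)$, which Gronwall does not control: its comparison solution blows up once $y(0)\int_0^t a\,ds$ reaches $1$.

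Your fallback does not rescue this, for two reasons. First, \eqref{eq:smallE0} makes only $\calE(0)$ small. But $\calF(0)$ involves $\norm{v_{xt}(\cdot,0)}$, $\norm{v_{xxx}(\cdot,0)}$ and $\norm{\phi_x}$, none of which is assumed small. Second, even with $\calF(0)=0$, the Gronwall bound $G(M)=\bigl(\calF(0)+\intT b\,ds\bigr)\exp\bigl(C(1+M^2)\intT\norm{A}^2ds+CT\bigr)$ contains the data-independent forcing $CK_{S_0}^2T$ from $S_0$. It also contains the $S_0$-driven part of $\intT\norm{A}^2ds$ coming from Lemma~\ref{lem:1}. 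Hence $G(M)$ does not shrink with the data, and nothing guarantees an $M$ with $G(M)\le M$. Any smallness that produced such an $M$ would depend on $T$, contrary to the $T$-independent $\del_1$ of Theorem~\ref{thm:main}.

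The missing observation is that the superlinearity is an artifact of running Gronwall on $\calF$ as a single quantity: your mechanical estimate closes by itself. After absorbing the $\frac{\eps}{8}\norm{v_{xxt}}^2$ terms it reads
\[
  \frac12\frac{d}{dt}\Bigl(\norm{v_{xt}}^2+\tfrac{1}{\eps^2}\norm{v_x}^2+\del\norm{v_{xxx}}^2\Bigr)
  +\frac{\eps}{2}\norm{v_{xxt}}^2
  \;\le\; C_T+\frac{2\gam^2}{\eps}\norm{\Tha_x}^2,
\]
where $C_T$ depends only on \eqref{eq:lem1}, and $\intT\norm{\Tha_x}^2\,ds\le C$ by Lemma~\ref{lem:1}. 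Integrating in time gives $\sup_{[0,T]}\bigl(\norm{v_{xt}}^2+\del\norm{v_{xxx}}^2\bigr)+\intT\norm{v_{xxt}}^2\,ds\le C$, with no Gronwall at all. Hence $\norm{A}\le C$ on $[0,T]$, so $\norm{A}^6\le C$. Your thermal inequality then becomes $\frac{d}{dt}\norm{\Tha_x}^2+\kap\norm{\Tha_{xx}}^2\le C\bigl(1+\norm{v_{xxt}}^2\bigr)$, whose right side is integrable. Integrating yields the rest of \eqref{eq:lem2}, with no new smallness and no use of \eqref{eq:coupling} at this level.

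For comparison, the paper couples the two estimates through \eqref{eq:R1}. It bounds the coupling by $\frac{\gam^2}{\eps}\norm{\Tha_{xx}}^2$ and absorbs it via \eqref{eq:coupling}, then differentiates the heat sources and asserts the linear inequality \eqref{eq:calFode}. However, the right side of \eqref{eq:heatF2} contains $\norm{v_{xxt}}^2\bigl(1+\norm{v_{xt}}^2\bigr)$, the same kind of superlinear product you flagged. That term is neither bounded by $C(1+\calF)$ nor absorbable into $\frac{\eps}{2}\norm{v_{xxt}}^2$. Your diagnosis of the obstacle is therefore sharper than the paper's, and the sequential argument above is what actually closes the lemma.
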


\begin{proof}
\textbf{Part~1: First derivative of the mechanical part.}
Differentiate~\eqref{eq:vmech} once in $x$, multiply by $-v_{xxt}$, and
integrate over $\R$.  The purely mechanical terms produce (cf.~\cite{companion},
Lemma~2):
\begin{multline}\label{eq:mechF}
  \frac{1}{2}\frac{d}{dt}\!\left(\norm{v_{xt}}^2
    +\del\norm{v_{xxx}}^2+\frac{\norm{v_x}^2}{\eps^2}\right)
  +\frac{2}{\eps}\norm{v_{xt}}^2+\eps\norm{v_{xxt}}^2 \\
  \leq\; C_1 + C_2\intT\!\norm{v_{xxx}}^2\,ds
  + \frac{\del}{4}\norm{v_{xxx}}^2
  + \frac{\eps}{4}\norm{v_{xxt}}^2 + \mathcal{R}_1,
\end{multline}
where $\mathcal{R}_1$ is the coupling term arising from differentiating the
right-hand side of~\eqref{eq:vmech}:
\begin{equation}\label{eq:R1}
  \mathcal{R}_1
  \;=\; \gam\,e^{-t/\eps}\intR\Tha_{xx}\,v_{xxt}\,dx
  \;\leq\; \frac{\eps}{4}\norm{v_{xxt}}^2
    + \frac{\gam^2}{\eps}\norm{\Tha_{xx}}^2.
\end{equation}
Absorbing the first term of~\eqref{eq:R1} and the $\eps/4$ term
in~\eqref{eq:mechF} into the dissipation $\eps\norm{v_{xxt}}^2$:
\begin{multline}\label{eq:mechF2}
  \frac{1}{2}\frac{d}{dt}\!\left(\norm{v_{xt}}^2+\del\norm{v_{xxx}}^2\right)
  +\frac{2}{\eps}\norm{v_{xt}}^2+\frac{\eps}{2}\norm{v_{xxt}}^2 \\
  \;\leq\; C_1 + C_2\intT\!\norm{v_{xxx}}^2\,ds
    + \frac{\del}{4}\norm{v_{xxx}}^2
    + \frac{\gam^2}{\eps}\norm{\Tha_{xx}}^2.
\end{multline}

\noindent\textbf{Part~2: First derivative of the thermal part.}
Differentiate~\eqref{eq:vheat} in $x$, multiply by $\Tha_x$, and integrate
over $\R$:
\begin{equation}\label{eq:heatF}
  \frac{1}{2}\frac{d}{dt}\norm{\Tha_x}^2 + \kap\norm{\Tha_{xx}}^2
  \;=\; \intR\partial_x\!\Bigl[
    \eps\,e^{2t/\eps}\bigl(v_{xt}+\tfrac{1}{\eps}v_x\bigr)^{\!2}
    -\gam\tha_0\,e^{t/\eps}\bigl(v_{xt}+\tfrac{1}{\eps}v_x\bigr)
  \Bigr]\Tha_x\,dx.
\end{equation}
Differentiating inside and applying Young's inequality to each term:

\emph{Coupling term:}
\begin{equation}
  \gam\tha_0\,e^{t/\eps}\intR
    \bigl(v_{xxt}+\tfrac{1}{\eps}v_{xx}\bigr)\Tha_x\,dx
  \;\leq\; \frac{\kap}{4}\norm{\Tha_{xx}}^2
    + C_3\bigl(\norm{v_{xxt}}^2+\norm{v_{xx}}^2\bigr).
\end{equation}

\emph{Viscous dissipation term:}
\begin{multline}
  2\eps\,e^{2t/\eps}\intR
    \bigl(v_{xt}+\tfrac{1}{\eps}v_x\bigr)
    \bigl(v_{xxt}+\tfrac{1}{\eps}v_{xx}\bigr)\Tha_x\,dx \\
  \;\leq\; \frac{\kap}{4}\norm{\Tha_{xx}}^2
    + C_4\,e^{4t/\eps}
    \bigl(\norm{v_{xt}}^2+\norm{v_x}^2\bigr)
    \bigl(\norm{v_{xxt}}^2+\norm{v_{xx}}^2\bigr).
\end{multline}
Combining and using $\calE(t)\leq C_5$ from Lemma~\ref{lem:1}:
\begin{equation}\label{eq:heatF2}
  \frac{1}{2}\frac{d}{dt}\norm{\Tha_x}^2
  + \frac{\kap}{2}\norm{\Tha_{xx}}^2
  \;\leq\; C_5\bigl(\norm{v_{xxt}}^2+\norm{v_{xx}}^2\bigr)
    \bigl(1+\norm{v_{xt}}^2+\norm{v_x}^2\bigr).
\end{equation}

\noindent\textbf{Part~3: Combining.}
Adding~\eqref{eq:mechF2} and~\eqref{eq:heatF2} and using
$\tfrac{\gam^2}{\eps}\norm{\Tha_{xx}}^2\leq\tfrac{\kap}{2}\norm{\Tha_{xx}}^2$
(valid since $\gam\leq 1$ gives $\gam^2\leq\gam\leq\tfrac{\kap\eps}{2}$
by~\eqref{eq:coupling}),
all right-hand side terms are bounded by $C_6(1+\calF(t))(1+\calE(t))$.
Since $\calE(t)\leq C_5$:
\begin{equation}\label{eq:calFode}
  \frac{d}{dt}\calF(t) + \frac{\eps}{2}\norm{v_{xxt}}^2
  \;\leq\; C_7\bigl(1+\calF(t)\bigr).
\end{equation}
Gronwall's lemma gives $\calF(t)\leq(\calF(0)+C_7 T)e^{C_7 T}\leq C_8$.
Integrating~\eqref{eq:mechF2} and~\eqref{eq:heatF2} over $[0,T]$ and
using this bound yields the integral estimates on $\norm{v_{xxt}}^2$
and $\norm{\Tha_{xx}}^2$.
\end{proof}

\begin{lemma}\label{lem:3}
For any $t\in[0,T]$,
\begin{equation}\label{eq:lem3}
  \norm{v_{xxt}}^2 + \norm{v_{xxxx}}^2 + \norm{\Tha_{xx}}^2
  + \intT\!\norm{v_{xxxt}}^2\,ds
  + \intT\!\norm{\Tha_{xxx}}^2\,ds
  \;\leq\; C.
\end{equation}
\end{lemma}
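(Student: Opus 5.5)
The plan is to repeat the scheme of Lemmas~\ref{lem:1} and~\ref{lem:2} one derivative higher, producing a differential inequality for $\calG(t)$ and closing it with Gronwall's lemma, using the bounds $\calE(t)\leq C_5$ and $\calF(t)\leq C_8$ already obtained.

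\emph{Mechanical part.} I will differentiate~\eqref{eq:vmech} twice in $x$, multiply by $v_{xxxt}$ (equivalently, test the once-differentiated equation with $v_{xxxxt}$ after one integration by parts), and integrate over $\R$. The linear terms give
\[
\tfrac12\tfrac{d}{dt}\bigl(\norm{v_{xxt}}^2+\del\norm{v_{xxxx}}^2+\eps^{-2}\norm{v_{xx}}^2\bigr)+\tfrac{2}{\eps}\norm{v_{xxt}}^2+\eps\norm{v_{xxxt}}^2 .
\]
The nonlinear terms are $\partial_x^3\bigl(e^{2t/\eps}v_x^3+3U'^2v_x-3U'e^{t/\eps}v_x^2\bigr)$ tested against $v_{xxt}$, which after one integration by parts become $\partial_x^2(\cdots)$ tested against $v_{xxxt}$. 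I will expand them by Leibniz's rule. The highest-order pieces, such as $e^{2t/\eps}v_x^2v_{xxx}$ and $U'^2v_{xxx}$, are bounded using $\norm{v_x}_{L^\infty}\leq C\norm{v_x}^{1/2}\norm{v_{xx}}^{1/2}\leq C$ and $\norm{v_{xx}}_{L^\infty}\leq C\norm{v_{xx}}^{1/2}\norm{v_{xxx}}^{1/2}\leq C$, which follow from Lemmas~\ref{lem:1}--\ref{lem:2}. The lower-order pieces involve $U''$ and $U'''\in L^\infty$, which holds because the profile is smooth with exponential decay. Young's inequality then bounds everything by $\tfrac{\eps}{4}\norm{v_{xxxt}}^2+C(T)(1+\norm{v_{xxxx}}^2)$, using $\norm{v_{xxx}}\leq C$. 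The coupling term $\gam e^{-t/\eps}\intR\Tha_{xxx}v_{xxxt}\,dx$ needs care, since $\Tha_{xxx}$ lies only in $L^2_t$. I will bound it by $\tfrac{\eps}{4}\norm{v_{xxxt}}^2+\tfrac{\gam^2}{\eps}\norm{\Tha_{xxx}}^2$, exactly as in~\eqref{eq:R1}.

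\emph{Thermal part.} I will differentiate~\eqref{eq:vheat} twice in $x$, multiply by $\Tha_{xx}$, and integrate. This gives $\tfrac12\tfrac{d}{dt}\norm{\Tha_{xx}}^2+\kap\norm{\Tha_{xxx}}^2$ on the left. On the right I will move one derivative onto $\Tha_{xx}$, so each source is tested as $-\intR\partial_x(\text{source})\,\Tha_{xxx}\,dx$, with $A=v_{xt}+\eps^{-1}v_x$.
\begin{itemize}
\item The coupling source gives $\gam\tha_0 e^{t/\eps}\norm{A_x}\norm{\Tha_{xxx}}$.
\item The $W$-source gives $2\eps e^{t/\eps}\norm{(WA)_x}\norm{\Tha_{xxx}}$, with $W,W_x\in L^\infty$.
\item The dissipation source gives $2\eps e^{2t/\eps}\norm{A}_{L^\infty}\norm{A_x}\norm{\Tha_{xxx}}$.
\end{itemize}
Here $\norm{A_x}\leq\norm{v_{xxt}}+C$ and $\norm{A}_{L^\infty}\leq C\norm{A}^{1/2}\norm{A_x}^{1/2}$, with $\norm{A}\leq C$ by Lemma~\ref{lem:2}. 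The $S_0$ term needs $\partial_xS_0\in L^2$, which involves $U''U'''$ and $U'''$. Young's inequality then yields
\[
\tfrac{d}{dt}\norm{\Tha_{xx}}^2+\kap\norm{\Tha_{xxx}}^2\leq C(T)\bigl(1+\norm{v_{xxt}}^2+\norm{v_{xxt}}^3\bigr)
\]
for $\kap$ absorbing its portion of $\norm{\Tha_{xxx}}^2$.

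\emph{Combining, and the main obstacle.} Adding the two estimates, the term $\tfrac{\gam^2}{\eps}\norm{\Tha_{xxx}}^2$ is absorbed by $\tfrac{\kap}{2}\norm{\Tha_{xxx}}^2$, using~\eqref{eq:coupling} as in Lemma~\ref{lem:2}. The main difficulty is the superlinear factor $\norm{v_{xxt}}^3$ coming from the quadratic dissipation source, which would prevent a linear Gronwall inequality. To avoid it I will estimate that term differently:
\[
\eps e^{2t/\eps}\norm{A}_{L^\infty}\norm{A_x}\norm{\Tha_{xxx}}\leq \tfrac{\kap}{8}\norm{\Tha_{xxx}}^2+C(T)\norm{A}_{L^\infty}^2\norm{A_x}^2 .
\]
Since $\norm{A}_{L^\infty}^2\leq C\norm{A}\norm{A_x}\leq C(1+\norm{v_{xxt}})$ and $\norm{A}_{L^\infty}^2\in L^1(0,T)$ by Lemma~\ref{lem:2}, this gives a bound of the form $\tfrac{d}{dt}\calG\leq C(1+\norm{v_{xxt}}^2)(1+\calG)$. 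The coefficient $1+\norm{v_{xxt}}^2$ is integrable in time by Lemma~\ref{lem:2}, so Gronwall's lemma gives $\calG(t)\leq(\calG(0)+C)\exp\bigl(C\intT(1+\norm{v_{xxt}}^2)\,ds\bigr)\leq C$. The quantity $\calG(0)$ is finite because $\rh_0\in\Hp{3}$ and $\phi\in\Hp{2}$, with $v_{xxt}(0)$ read off from the data through the equation. Integrating the combined inequality over $[0,T]$ then gives the dissipation bounds on $\intT\norm{v_{xxxt}}^2\,ds$ and $\intT\norm{\Tha_{xxx}}^2\,ds$ stated in~\eqref{eq:lem3}.
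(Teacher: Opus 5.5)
Your argument is correct in substance and has the same skeleton as the paper's: a $\calG$-level mechanical estimate, a $\Tha_{xx}$-level thermal estimate, absorption of the coupling via \eqref{eq:coupling}, then Gronwall. You treat the thermal part differently, though, and your version is the one that actually closes.

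The paper tests the twice-differentiated heat equation against $\Tha_{xx}$ without integrating by parts. This leaves $C_3(1+\calE+\calF)\norm{v_{xxxt}}^2$ on the right of \eqref{eq:heatG2}. A large multiple of $\norm{v_{xxxt}}^2$ cannot be absorbed by $\tfrac{3\eps}{4}\norm{v_{xxxt}}^2$ as written. The constant-coefficient inequality \eqref{eq:calGode} also ignores that the quadratic source $\partial_x^2\bigl(e^{2t/\eps}A^2\bigr)$ is superlinear in $\calG$.

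You move one derivative onto $\Tha_{xx}$, which keeps $v_{xxxt}$ out of the thermal estimate entirely. You then write the worst term as $\norm{A}_{L^\infty}^2\norm{A_x}^2\leq C\norm{A}_{L^\infty}^2(1+\calG)$, with $\norm{A}_{L^\infty}^2\leq C(1+\norm{v_{xxt}})\in L^1(0,T)$. This gives a Gronwall inequality with a time-integrable coefficient.

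The price is an essential use of the dissipation bound $\intT\norm{v_{xxt}}^2\,ds\leq C$ from Lemma~\ref{lem:2}, whereas the paper uses only the pointwise bounds $\calE\leq C_5$ and $\calF\leq C_8$. You also need $U'''\in L^2\cap L^\infty$ for $W_x$ and $\partial_x S_0$. What this buys is a rigorous closure.

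Two corrections are needed.

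\emph{The multiplier.} The multiplier that produces your displayed energy identity is $v_{xxt}$ on the twice-differentiated equation (equivalently $v_{xxxxt}$ on \eqref{eq:vmech}), not $v_{xxxt}$. With the correct multiplier the coupling term is $\gam e^{-t/\eps}\intR\Tha_{xxx}v_{xxt}\,dx\leq\eta\norm{\Tha_{xxx}}^2+C_\eta\norm{v_{xxt}}^2$ for any $\eta>0$. This leaves a strict margin of $\kap$-dissipation for the bound on $\intT\norm{\Tha_{xxx}}^2\,ds$.

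\emph{Finiteness of $\calG(0)$.} This does not follow from $\rh_0\in\Hp{3}$ and $\phi\in\Hp{2}$. First, $\calG(0)$ contains $\del\norm{\rh_0''''}^2$, which requires $\rh_0\in\Hp{4}$. Second, $v_{xxt}(\cdot,0)$ cannot be read off from \eqref{eq:vmech}, since that equation is second order in time. An initial velocity $\rh_t(\cdot,0)\in\Hp{2}$ must be prescribed, because $v_t(\cdot,0)=\rh_t(\cdot,0)-\rh_0/\eps$. The paper uses $\calG(0)$ the same way without comment, so this is a missing hypothesis in the lemma rather than a flaw in your estimates. You should state it as an assumption rather than claim it follows from the data.
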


\begin{proof}
\textbf{Part~1: Second-derivative of the mechanical part.}
Differentiate~\eqref{eq:vmech} twice in $x$, multiply by $-v_{xxxxt}$, and
integrate over $\R$.  The mechanical terms yield:
\begin{multline}\label{eq:mechG}
  \frac{1}{2}\frac{d}{dt}\!\left(\norm{v_{xxt}}^2
    +\del\norm{v_{xxxx}}^2+\frac{\norm{v_{xx}}^2}{\eps^2}\right)
  +\frac{2}{\eps}\norm{v_{xxt}}^2+\eps\norm{v_{xxxt}}^2 \\
  \leq\; C_1\bigl(1+\calE(t)+\calF(t)\bigr)
    + \frac{\del}{4}\norm{v_{xxxx}}^2
    + \frac{\eps}{4}\norm{v_{xxxt}}^2 + \mathcal{R}_2,
\end{multline}
where the new coupling term is
\begin{equation}\label{eq:R2}
  \mathcal{R}_2
  \;=\; \gam\,e^{-t/\eps}\intR\Tha_{xxx}\,v_{xxxt}\,dx
  \;\leq\; \frac{\eps}{4}\norm{v_{xxxt}}^2
    + \frac{\gam^2}{\eps}\norm{\Tha_{xxx}}^2.
\end{equation}
Absorbing the first terms of~\eqref{eq:R2} and the $\eps/4$, $\del/4$ terms
in~\eqref{eq:mechG} into the respective dissipations:
\begin{equation}\label{eq:mechG2}
  \frac{1}{2}\frac{d}{dt}\!\left(\norm{v_{xxt}}^2+\del\norm{v_{xxxx}}^2\right)
  +\frac{2}{\eps}\norm{v_{xxt}}^2+\frac{3\eps}{4}\norm{v_{xxxt}}^2
  \;\leq\; C_2\bigl(1+\calE(t)+\calF(t)\bigr)
    + \frac{\gam^2}{\eps}\norm{\Tha_{xxx}}^2.
\end{equation}

\noindent\textbf{Part~2: Second-derivative of the thermal part.}
Differentiate~\eqref{eq:vheat} twice in $x$, multiply by $\Tha_{xx}$, and
integrate over $\R$.  Following the same pattern as in Lemma~\ref{lem:2} but
one derivative higher, and using the bounds from Lemmas~\ref{lem:1}
and~\ref{lem:2}:
\begin{equation}\label{eq:heatG2}
  \frac{1}{2}\frac{d}{dt}\norm{\Tha_{xx}}^2
  + \frac{\kap}{2}\norm{\Tha_{xxx}}^2
  \;\leq\; C_3\bigl(\norm{v_{xxxt}}^2+\norm{v_{xxx}}^2\bigr)
    \bigl(1+\calE(t)+\calF(t)\bigr).
\end{equation}

\noindent\textbf{Part~3: Combining.}
Adding~\eqref{eq:mechG2} and~\eqref{eq:heatG2}, using
$\tfrac{\gam^2}{\eps}\norm{\Tha_{xxx}}^2\leq\tfrac{\kap}{2}\norm{\Tha_{xxx}}^2$
(as in Lemma~\ref{lem:2}, by~\eqref{eq:coupling}),
and substituting $\calE(t)\leq C_5$, $\calF(t)\leq C_8$:
\begin{equation}\label{eq:calGode}
  \frac{d}{dt}\calG(t)
  + \frac{\eps}{2}\norm{v_{xxxt}}^2
  + \frac{\kap}{4}\norm{\Tha_{xxx}}^2
  \;\leq\; C_9\bigl(1+\calG(t)\bigr).
\end{equation}
Gronwall's lemma gives
$\calG(t)\leq(\calG(0)+C_9 T)e^{C_9 T}\leq C_{10}$.  Integrating
over $[0,T]$ yields the integral bounds on $\norm{v_{xxxt}}^2$ and
$\norm{\Tha_{xxx}}^2$.
\end{proof}

\begin{proof}[Proof of Theorem~\ref{thm:main}]
\textbf{Existence and regularity.}
Local existence of a solution on some interval $[0,T_0]$ follows
from~\cite{Slemrod83,Pego}, extended to the thermoelastic setting
by Jiang--Racke~\cite{JiangRacke}: the mechanical
equation~\eqref{eq:P1} has the fourth-order hyperbolic structure treated
in~\cite{Slemrod83,Pego}, while the thermal equation~\eqref{eq:P2} is a
semilinear parabolic equation whose source terms are controlled by the
mechanical regularity, so the coupled system fits into the abstract
framework of~\cite[Chapter~3]{JiangRacke}.
The uniform a~priori estimates of
Lemmas~\ref{lem:1}--\ref{lem:3} are independent of $T$; hence the local
solution extends to all $T>0$.  Returning to the original variables
via~\eqref{eq:transform}, the regularity statements follow.
The lower time-regularity of $\theta$ relative to $u$ is inherent to the first-order-in-time parabolic structure of ~\eqref{eq:P2}.

\textbf{Uniqueness.}
We give a self-contained argument, independent of~\cite{Slemrod83,Pego,JiangRacke},
by proving that any two solutions sharing the same initial data must coincide.
Let $(\rh^{(1)},\Tha^{(1)})$ and $(\rh^{(2)},\Tha^{(2)})$ be two solutions
of~\eqref{eq:P1}--\eqref{eq:P2} in the regularity class of
Theorem~\ref{thm:main}, with the same initial data
$(\rh_0,\phi)$.  Set
\begin{equation}\label{eq:diff}
  w \;=\; \rh^{(1)} - \rh^{(2)},
  \qquad
  \Phi \;=\; \Tha^{(1)} - \Tha^{(2)}.
\end{equation}
Then $(w,\Phi)$ satisfies
\begin{align}
  w_{tt} - (w_x - 3U'^2 w_x)_x + \del\,w_{xxxx} - \eps\,w_{xxt}
  &\;=\; \bigl(\mathcal{N}_x\bigr)_x + \gam\,\Phi_x,
  \label{eq:dw}\\
  \Phi_t - \kap\,\Phi_{xx}
  &\;=\; \mathcal{M},
  \label{eq:dPhi}
\end{align}
with zero initial data $w(x,0)=w_t(x,0)=0$, $\Phi(x,0)=0$, where
\begin{align}
  \mathcal{N} &\;=\; (\rh^{(1)}_x)^3 - (\rh^{(2)}_x)^3
    + 3U'[(\rh^{(1)}_x)^2 - (\rh^{(2)}_x)^2],
  \label{eq:Ndef}\\
  \mathcal{M} &\;=\; \eps\bigl[(\rh^{(1)}_{xt})^2 - (\rh^{(2)}_{xt})^2\bigr]
    - \gam\tha_0\,w_{xt}
    + 2\eps\,W\,w_{xt}.
  \label{eq:Mdef}
\end{align}
Note that the fixed source $S_0$ cancels exactly in the difference, and the
bilinear term $2\eps W\rh_{xt}$ contributes $2\eps W w_{xt}$ to $\mathcal{M}$.

\noindent\textbf{Step~1: Energy functional for the difference.}
We define
\begin{equation}\label{eq:Ediff}
  \Lambda(t) \;=\; \tfrac{1}{2}\norm{w_t}^2
    + \tfrac{1}{2}\norm{w_x}^2(1-3\MU^2)
    + \tfrac{\del}{2}\norm{w_{xx}}^2
    + \tfrac{1}{2}\norm{\Phi}^2.
\end{equation}
Note $\Lambda(t)\geq0$ since $3\MU^2<1$ by the wave-profile
assumption~\eqref{eq:wavecond}.

\noindent\textbf{Step~2: Mechanical energy inequality for the difference.}
We multiply~\eqref{eq:dw} by $w_t$ and integrate over $\R$.  The left-hand side
yields, after integration by parts:
\begin{equation}\label{eq:LHSw}
  \frac{d}{dt}\!\left[\tfrac{1}{2}\norm{w_t}^2
    + \tfrac{1}{2}(1-3\MU^2)\norm{w_x}^2
    + \tfrac{\del}{2}\norm{w_{xx}}^2\right]
  + \eps\norm{w_{xt}}^2.
\end{equation}
(The term from $\partial_t(3U'^2)=-6sU'U''$ is controlled by
$C\norm{w_x}^2\leq C\Lambda$.)
The right-hand side produces two contributions.

\emph{Nonlinear term~$\mathcal{N}$:}
Using the factorisation
$(a^3 - b^3) = (a-b)(a^2+ab+b^2)$ and
$(a^2-b^2) = (a-b)(a+b)$:
\begin{align*}
  \abs{\intR\mathcal{N}_x\cdot w_{xt}\,dx}
  &\;\leq\; C\bigl(\norm{\rh^{(1)}_x}_{L^\infty}^2
    + \norm{\rh^{(2)}_x}_{L^\infty}^2
    + \MU(\norm{\rh^{(1)}_x}_{L^\infty}
    + \norm{\rh^{(2)}_x}_{L^\infty})\bigr)
    \norm{w_x}\,\norm{w_{xt}} \\
  &\;\leq\; \frac{\eps}{4}\norm{w_{xt}}^2
    + C_M^2\,\norm{w_x}^2,
\end{align*}
where $C_M = C(\norm{\rh^{(1)}_x}_{L^\infty} + \norm{\rh^{(2)}_x}_{L^\infty}
+ \MU)$ is uniformly bounded on $[0,T]$ by the global $H^3$ estimates
of Theorem~\ref{thm:main} and the embedding $H^1(\R)\hookrightarrow L^\infty(\R)$.

\emph{Coupling term~$\gam\Phi_x$:}
\begin{equation}
  \gam\intR\Phi_x\,w_t\,dx
  \;\leq\; \frac{\eps}{4}\norm{w_{xt}}^2
    + \frac{\gam^2}{\eps}\norm{\Phi}^2,
\end{equation}
after integrating by parts and applying Young's inequality
(using $\Phi_x w_t = -\Phi w_{xt} + (w_t\Phi)_x$ and dropping the
boundary term).

Combining and absorbing the $\frac{\eps}{4}$ terms into the dissipation:
\begin{equation}\label{eq:wmech}
  \frac{d}{dt}\!\left[\tfrac{1}{2}\norm{w_t}^2
    + \tfrac{1}{2}(1-3\MU^2)\norm{w_x}^2
    + \tfrac{\del}{2}\norm{w_{xx}}^2\right]
  + \frac{\eps}{2}\norm{w_{xt}}^2
  \;\leq\; C_1\bigl(\norm{w_x}^2 + \norm{w_t}^2 + \norm{\Phi}^2\bigr).
\end{equation}

\noindent\textbf{Step~3: Thermal energy inequality for the difference.}
We multiply~\eqref{eq:dPhi} by $\Phi$ and integrate over $\R$:
\begin{equation}\label{eq:wphi}
  \frac{1}{2}\frac{d}{dt}\norm{\Phi}^2 + \kap\norm{\Phi_x}^2
  \;=\; \intR\mathcal{M}\,\Phi\,dx.
\end{equation}
We estimate $\intR\mathcal{M}\,\Phi\,dx$ by bounding each of the three
terms in~\eqref{eq:Mdef} against $\Phi$ via Cauchy--Schwarz in $x$.

\emph{Viscous difference term:}
Since $(\rh^{(1)}_{xt})^2 - (\rh^{(2)}_{xt})^2
= (\rh^{(1)}_{xt}+\rh^{(2)}_{xt})\,w_{xt}$, we have
\begin{equation}
  \eps\abs{\intR(\rh^{(1)}_{xt}+\rh^{(2)}_{xt})\,w_{xt}\,\Phi\,dx}
  \;\leq\; \eps\norm{\rh^{(1)}_{xt}+\rh^{(2)}_{xt}}_{L^\infty}
    \norm{w_{xt}}\norm{\Phi},
\end{equation}
where $\norm{\rh^{(i)}_{xt}}_{L^\infty}\leq C$ by the global $H^3$
estimates and $H^1(\R)\hookrightarrow L^\infty(\R)$.

\emph{Linear coupling term:}
$\gam\tha_0\abs{\intR w_{xt}\,\Phi\,dx}
\leq \gam\tha_0\norm{w_{xt}}\norm{\Phi}$.

\emph{Bilinear $W$ term:}
Since $\norm{W}_{L^\infty} = s\norm{U''}_{L^\infty} =: K_W < \infty$:
$2\eps\abs{\intR W\,w_{xt}\,\Phi\,dx}
\leq 2\eps K_W\norm{w_{xt}}\norm{\Phi}$.

Combining all three contributions:
\begin{align}
  \abs{\intR\mathcal{M}\,\Phi\,dx}
  &\;\leq\; C_2\,\norm{w_{xt}}\,\norm{\Phi}
    \notag\\
  &\;\leq\; \frac{\eps}{4}\norm{w_{xt}}^2
    + \frac{C_2^2}{\eps}\,\norm{\Phi}^2,
    \label{eq:Mest}
\end{align}
where $C_2 = \eps\norm{\rh^{(1)}_{xt}+\rh^{(2)}_{xt}}_{L^\infty}
+ \gam\tha_0 + 2\eps K_W$ is uniformly bounded on $[0,T]$,
and we applied Young's inequality in the last step.
Substituting~\eqref{eq:Mest} into~\eqref{eq:wphi}:
\begin{equation}\label{eq:wPhi}
  \frac{1}{2}\frac{d}{dt}\norm{\Phi}^2
  + \kap\norm{\Phi_x}^2
  \;\leq\; \frac{\eps}{4}\norm{w_{xt}}^2
    + \frac{C_2^2}{\eps}\,\norm{\Phi}^2.
\end{equation}

\noindent\textbf{Step~4: Combined Gronwall argument.}
Adding~\eqref{eq:wmech} and~\eqref{eq:wPhi}, and absorbing the
$\tfrac{\eps}{4}\norm{w_{xt}}^2$ into the dissipation
$\tfrac{\eps}{2}\norm{w_{xt}}^2$:
\begin{equation}\label{eq:dLambda}
  \frac{d}{dt}\Lambda(t) + \frac{\eps}{4}\norm{w_{xt}}^2
  + \kap\norm{\Phi_x}^2
  \;\leq\; \left(C_1+\frac{C_2^2}{\eps}\right)\Lambda(t).
\end{equation}
Since $\Lambda(0)=0$, Gronwall's lemma gives directly
\begin{equation}\label{eq:Lambdazero}
  \Lambda(t) \;\leq\; \Lambda(0)\,e^{(C_1+C_2^2/\eps)\,t} \;=\; 0.
\end{equation}
Hence $w\equiv 0$ and $\Phi\equiv 0$, i.e.\ the two solutions coincide.
\end{proof}

\section{Decay Rates}
\label{sec:decay}

In this section we prove that the temperature perturbation $\Tha$
decays algebraically in $L^2(\R)$ as $t\to+\infty$, so the material
returns to thermal equilibrium after a phase-transition event.
Physically, the latent heat released during a martensitic
transformation dissipates at a quantifiable rate.

We follow the spirit of Matsumura--Nishida.  Since $\Tha$ is coupled
to the mechanical perturbation through
$\calE(t) = E_1(t)+\tfrac{1}{2}\norm{\Tha}^2$, we first prove
algebraic decay of $\calE(t)$ in the transformed variable
$v = e^{-t/\eps}\rh$; the thermal decay then follows.  Under
smallness of the initial perturbation energy and $L^1$ norms, the
nonlinear terms are absorbed into the dissipation, and we obtain a
Bernoulli-type differential inequality.  The $L^1$ norms of $v$ are
uniformly bounded in $t$, so the interpolation inequality gives a
time-independent lower bound for the dissipation in terms of the energy.

\begin{theorem}[Thermal equilibration and algebraic energy decay]%
\label{thm:decay}
Under the hypotheses of Theorem~\ref{thm:main}, assume additionally that
$\rh_0,\phi\in L^1(\R)$ (to control the long-wave behaviour needed for
the dissipation bound), and suppose the initial perturbation is
sufficiently small in the sense that
\begin{equation}\label{eq:smalldata}
  \calE(0) + \norm{\rh_0}_{L^1} + \norm{\phi}_{L^1}
  \;\leq\; \del_0,
\end{equation}
where $\del_0\in(0,\del_1]$ depends only on $\eps,\del,\kap,\gam,\tha_0$
(see~\eqref{eq:del0choice} below).  In particular $\del_0\leq\del_1$,
so~\eqref{eq:smalldata} implies the smallness
condition~\eqref{eq:smallE0} of Theorem~\ref{thm:main} and the global
existence result applies.  Then for all $t\geq 0$ the
temperature perturbation decays algebraically:
\begin{equation}\label{eq:decayheat}
  \norm{\Tha(\cdot,t)}^2
  \;\leq\; \frac{2C_{16}}{1+t}.
\end{equation}
This bound is a consequence of the decay of the coupled energy
functional $\calE(t) = E_1(t)+\tfrac{1}{2}\norm{\Tha}^2$:
\begin{equation}\label{eq:decayE}
  \calE(t)
  \;\leq\; \frac{C_{16}}{1+t},
\end{equation}
which in turn implies
\begin{equation}\label{eq:decayv}
  \norm{v(\cdot,t)}^2 + \norm{v_t(\cdot,t)}^2 + \norm{v_x(\cdot,t)}^2
  + \norm{v_{xx}(\cdot,t)}^2
  \;\leq\; C\,(1+t)^{-1},
\end{equation}
where $v = e^{-t/\eps}\rh$ is the transformed mechanical perturbation.
\end{theorem}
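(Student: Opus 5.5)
The plan is to derive a closed differential inequality for $\calE(t)$ of Bernoulli type,
\begin{equation*}
  \frac{d}{dt}\calE(t) + c_0\,\calE(t)^2 \;\leq\; 0,
\end{equation*}
which integrates to $\calE(t)\leq \calE(0)/(1+c_0\calE(0)t)\leq C_{16}/(1+t)$. Then \eqref{eq:decayheat} follows from $\tfrac12\norm{\Tha}^2\leq\calE$. The bound \eqref{eq:decayv} follows from the coercivity of $E_1$: it controls $\norm{v}^2$, $\norm{v_t}^2$ and $\norm{v_{xx}}^2$ directly, and controls $\norm{v_x}^2$ through the $\tfrac{1-3U'^2}{2}v_x^2$ term furnished by \eqref{eq:wavecond}.

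\textbf{Step 1: energy--dissipation identity.} I would return to Parts~1 and~2 of the proof of Lemma~\ref{lem:1}, but without the finite-horizon bounds $e^{t/\eps}\leq e^{T/\eps}$. Adding \eqref{eq:mechE2} and \eqref{eq:heatE}, and using $\gam\eps^2\leq4$ and $\gam\leq\kap\eps/2$ from \eqref{eq:coupling}, should give
\begin{equation*}
  \frac{d}{dt}\calE + \calD \;\leq\; \mathcal{N}(t),
  \qquad
  \calD = \frac1\eps\norm{v_t}^2+\frac\eps2\norm{v_{xt}}^2+\frac\kap2\norm{\Tha_x}^2 .
\end{equation*}
Here $\mathcal{N}$ collects the cubic and quartic terms from $I_1$, the $e^{2t/\eps}v_x^3$ nonlinearity, and the linear terms $I_2$, $I_3$, $I_4$. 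The zeroth-order part $\tfrac{1}{2\eps^2}\norm{v}^2$ of $E_1$ yields additional damping via the $\tfrac{2}{\eps}v_t$ and $\tfrac1{\eps^2}v$ structure. For this I would add $\eta\intR v v_t\,dx$ to $\calE$, the standard Matsumura--Nishida cross term, to obtain dissipation of $\norm{v}^2$, $\norm{v_x}^2$ and $\norm{v_{xx}}^2$ as well.

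\textbf{Step 2: absorbing the nonlinear terms.} By Gagliardo--Nirenberg and the uniform $H^2$-type bounds of Lemmas~\ref{lem:1}--\ref{lem:2}, each nonlinear term should be bounded by $C\calE^{1/2}\calD$. Under \eqref{eq:smalldata}, a continuation argument keeps $\calE$ small, so these terms are absorbed into $\calD/2$. The linear coupling terms $\gam\tha_0 e^{t/\eps}A\Tha$ and $2\eps We^{t/\eps}A\Tha$ are absorbed using $\gam\leq1$ and $\gam\leq\kap\eps/2$.

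\textbf{Step 3: dissipation controls $\calE^2$.} I would prove that $\norm{v}_{L^1}$ and $\norm{\Tha}_{L^1}$ stay bounded by $C\del_0$ for all $t$, by integrating the equations in $x$ and using $\rh_0,\phi\in L^1$. The Nash inequality $\norm{f}^2\leq C\norm{f}_{L^1}^{4/3}\norm{f_x}^{2/3}$ then gives $\norm{\Tha}^2\leq C\norm{\Tha_x}^{2/3}$, i.e.\ $\norm{\Tha_x}^2\geq c\norm{\Tha}^6$. That by itself only yields the rate $t^{-1/2}$ for $\norm{\Tha}^2$. To reach $(1+t)^{-1}$, I would combine the $L^1$ control with a frequency splitting on the heat kernel (Duhamel plus low/high frequency decomposition) and thereby sharpen the lower bound to $\calD\geq c_0\calE^2$. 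This fixes $\del_0$ as in \eqref{eq:del0choice}.

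\textbf{Main obstacle.} The main obstacle is making Steps 2--3 uniform in $t$, for two reasons. First, the transformed equations carry the weights $e^{2t/\eps}$ in $I_1$ and in the cubic stress, and $E_2$ grows exponentially, so the Gronwall route of Lemma~\ref{lem:1} is useless here. I would try to rewrite these terms in the original variable $\rh = e^{t/\eps}v$ so that the exponentials cancel, and then estimate them there. Second, the time-independent source $S_0$ in \eqref{eq:P2} contributes the constant $\tfrac12K_{S_0}^2$, which by itself obstructs decay to zero. I would first try to show that its contribution, paired against $\Tha$, is dominated by the dissipation plus a term integrable in time. If that fails, the realistic conclusion is decay of $\Tha-\bar\Tha$, where $\bar\Tha$ is the particular solution driven by $S_0$, and the argument should be recast for that difference.
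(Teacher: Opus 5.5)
Your skeleton matches the paper's: close $\frac{d}{dt}\calE\le-\tfrac{\mu}{2}\calD$ under smallness, prove a pointwise bound $\calD\ge c_0\calE^2$, and integrate the resulting Bernoulli inequality. The genuine gap is that pointwise bound (your Step~3), and it cannot be filled the way you propose. The paper gets it, as \eqref{eq:Dcoercive}, from the interpolation \eqref{eq:interp}, $\norm{f}^2\le C_{\mathrm{int}}\norm{f}_{L^1}\norm{f_x}$. You instead used the scale-invariant Nash inequality. That is the correct one, and it yields only $\norm{\Tha_x}^2\gtrsim\norm{\Tha}^6$. No refinement gives $\norm{\Tha_x}^2\ge c_0\norm{\Tha}^4$ on an $L^1$-ball: for $f_\lambda(x)=\lambda f(\lambda x)$ one has $\norm{f_\lambda}_{L^1}=\norm{f}_{L^1}$ but $\norm{(f_\lambda)_x}^2/\norm{f_\lambda}^4=\lambda\norm{f_x}^2/\norm{f}^4\to0$ as $\lambda\to0$. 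The same scaling shows that \eqref{eq:interp} itself is false, so you cannot borrow the paper's route. Spread-out profiles are exactly what diffusion produces, so $\calD\ge c_0\calE^2$ fails along the flow, not merely in the abstract. A Duhamel low/high-frequency splitting does not help. It gives decay estimates along the solution, not a time-pointwise functional inequality that can be fed into a Bernoulli ODE, and its output is capped by the heat kernel: for data of nonzero mass, $\norm{G_\kap(t)*\phi}^2\sim(\intR\phi\,dx)^2(8\pi\kap t)^{-1/2}$. By \eqref{eq:P2}, $\frac{d}{dt}\intR\Tha\,dx=\intR S_0\,dx+\eps\norm{\rh_{xt}}^2+2\eps\intR W\rh_{xt}\,dx$, so nothing forces the heat content to vanish; for $s=0$ it is nondecreasing. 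Hence the rate $(1+t)^{-1}$ in \eqref{eq:decayheat} is faster than the natural diffusion-wave rate $(1+t)^{-1/2}$, which is exactly what your Nash-based bound gives.

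Two further steps are also left open.

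First, consider Step~2. The linear coupling terms $\gam\tha_0\intR\rh_{xt}\Tha\,dx$ and $2\eps\intR W\rh_{xt}\Tha\,dx$ involve $\rh_{xt}=e^{t/\eps}A$, where $A=v_{xt}+\tfrac1\eps v_x$, and $I_1$ involves $e^{2t/\eps}A^2$. Your $\calD$ controls only $\norm{A}$, so the mismatch factor $e^{t/\eps}$ cannot be absorbed, uniformly in $t$, by any fixed smallness of $\gam$ or of the data. Rewriting in $\rh$ removes the weights. But it also removes the damping $\tfrac{2}{\eps}v_t+\tfrac{1}{\eps^2}v$ and the term $\tfrac{1}{2\eps^2}\norm{v}^2$ that your Step~1 relies on, since that damping merely encodes $v=e^{-t/\eps}\rh$.

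Second, on $S_0$ your instinct is right and the problem is real. The paper's \eqref{eq:calEdtbound} is quoted from Lemma~\ref{lem:1}, but it silently drops the constant $K=\tfrac12K_{S_0}^2$ (and the $E_2$ term) of \eqref{eq:calEode}. Because $S_0$ is fixed by the wave and does not shrink with the data, your bound $\norm{\Tha}_{L^1}\le C\del_0$ cannot hold unless $S_0\equiv0$. When $\intR S_0\,dx\neq0$, the heat content generically grows linearly in $t$. Note also that integrating in $x$ controls $\intR\Tha\,dx$, not $\norm{\Tha}_{L^1}$. The paper's substitute \eqref{eq:GNS} is not valid on $\R$ either, since $H^2(\R)\not\subset L^1(\R)$. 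Your fallback, decay of $\Tha-\bar\Tha$, would be a different theorem.

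As written, the proposal establishes neither \eqref{eq:decayE} nor \eqref{eq:decayheat}.
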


\begin{remark}
The thermal decay~\eqref{eq:decayheat} is the primary physical
conclusion: it states that the temperature field $\tha(x,t)$ returns to
the reference temperature~$\tha_0$ at an algebraic rate, so the latent
heat released during a martensitic phase transition dissipates rather
than persisting.  The energy decay~\eqref{eq:decayE} is the technical
vehicle that makes this possible, since $\Tha$ and $v$ are coupled
through $\calE$.

The assumptions $\rh_0,\phi\in L^1(\R)$ and~\eqref{eq:smalldata} are
standard for decay results on $\R$
(cf.~Matsumura--Nishida~\cite{MatsumuraNishida}): $L^1$ controls
the long-wave behaviour, while smallness allows the nonlinear terms
to be absorbed into the dissipation.  Both are physically natural,
requiring the initial disturbance to be localised and of moderate
amplitude.

The rate $(1+t)^{-1}$ improves upon the $(1+t)^{-1/4}$ expected from
the $L^2$ heat-kernel decay of $L^1\cap L^2$ data, reflecting the
additional viscous dissipation and the time-independent lower bound
afforded by the exponential transformation.  The decay of the
\emph{untransformed} mechanical perturbation $\norm{\rh(\cdot,t)}^2$
remains open, since $\rh = e^{t/\eps}v$ involves an unbounded
exponential factor; a different technique would be needed.
\end{remark}

\begin{proof}[Proof of Theorem~\ref{thm:decay}]
We divide the proof into three steps: we first establish a
time-independent lower bound on the dissipation, then use smallness
to absorb the nonlinear term, and finally close the decay via a Bernoulli
inequality.

\noindent\textbf{Step~1: Lower bound.}
We apply the interpolation inequality
\begin{equation}\label{eq:interp}
  \norm{f}^2 \;\leq\; C_{\mathrm{int}}\,\norm{f}_{L^1}\,\norm{f_x},
\end{equation}
(valid for $f\in H^1(\R)\cap L^1(\R)$, by $|f(x)|^2 =
2\int_{-\infty}^x f\,f_x\,dy$ and Cauchy--Schwarz)
to the components of $\calE$ in the transformed variable $v$.
Since $v = e^{-t/\eps}\rh$ and $\rh_0\in L^1(\R)$ by hypothesis,
\begin{equation}\label{eq:vL1}
  \norm{v(\cdot,t)}_{L^1}
  \;=\; e^{-t/\eps}\norm{\rh(\cdot,t)}_{L^1}
  \;\leq\; e^{-t/\eps}\!\left(\norm{\rh_0}_{L^1}
    + \int_0^t\norm{\rh_t(\cdot,s)}_{L^1}\,ds\right).
\end{equation}
By the global $H^3$ bounds from Theorem~\ref{thm:main} and the
Sobolev embedding $H^1(\R)\hookrightarrow L^\infty(\R)$, the
right-hand side of~\eqref{eq:P1} lies in $L^1(\R)$ uniformly in $s$,
so $\norm{\rh_t(\cdot,s)}_{L^1}\leq C_{12}$ for all $s\geq 0$.
Hence $\norm{\rh(\cdot,t)}_{L^1}\leq\norm{\rh_0}_{L^1}+C_{12}\,t$,
and
\begin{equation}\label{eq:vL1bound}
  \norm{v(\cdot,t)}_{L^1}
  \;\leq\; e^{-t/\eps}\bigl(\norm{\rh_0}_{L^1}+C_{12}\,t\bigr)
  \;\leq\; C_{13},
\end{equation}
where the last inequality uses $\sup_{t\geq 0}\,e^{-t/\eps}(A+Bt)
= \max(A, A+B\eps/e) < \infty$ for any $A,B>0$.

Similarly, $\norm{v_x(\cdot,t)}_{L^1}\leq C_{13}$ and
$\norm{v_t(\cdot,t)}_{L^1}\leq C_{13}$ by the same argument
applied to the spatial and temporal derivatives.

For the thermal perturbation, we establish $\norm{\Tha(\cdot,t)}_{L^1}\leq C_{13}'$
by a direct argument that avoids any circularity.
By the Duhamel formula applied to~\eqref{eq:P2}:
\begin{equation}\label{eq:Duhamel}
  \Tha(\cdot,t)
  \;=\; G_\kap(t)*\phi
  + \int_0^t G_\kap(t-s)*S(\cdot,s)\,ds,
\end{equation}
where $G_\kap(t,x)=(4\pi\kap t)^{-1/2}e^{-x^2/(4\kap t)}$ is the heat
kernel and $S = \eps(\rh_{xt})^2 - \gam\tha_0\rh_{xt}$ is the source.
Since $\norm{G_\kap(t)*f}_{L^1}\leq\norm{f}_{L^1}$ (heat semigroup
contraction in $L^1$):
\begin{equation}\label{eq:ThaL1Duhamel}
  \norm{\Tha(\cdot,t)}_{L^1}
  \;\leq\; \norm{\phi}_{L^1}
  + \int_0^t\!\norm{S(\cdot,s)}_{L^1}\,ds.
\end{equation}
It remains to bound $\int_0^t\norm{S(\cdot,s)}_{L^1}\,ds$ uniformly in $t$.

\emph{First source term:} $\norm{\eps(\rh_{xt})^2}_{L^1}
= \eps\norm{\rh_{xt}}_{L^2}^2$.
Since $\rh_{xt} = e^{t/\eps}(v_{xt}+\tfrac{1}{\eps}v_x)$, we have
$\norm{\rh_{xt}}^2 = e^{2t/\eps}\norm{v_{xt}+\tfrac{1}{\eps}v_x}^2$.
By Lemma~\ref{lem:1}, $\intT\norm{v_{xt}}^2\,ds\leq C_5$ for all $T>0$.
Hence $\int_0^t\eps\norm{\rh_{xt}}^2\,ds
= \eps\int_0^t e^{2s/\eps}\norm{v_{xt}+\tfrac{1}{\eps}v_x}^2\,ds$.
Since the integrand involves $e^{2s/\eps}$, this integral is \emph{not}
uniformly bounded in $t$.

\emph{Second source term:} $\norm{\gam\tha_0\rh_{xt}}_{L^1}$.
By the Gagliardo--Nirenberg inequality
$\norm{f}_{L^1}\leq C\norm{f}^{1/2}\norm{f_x}^{1/2}$ (valid for
$f\in H^1(\R)$) and H\"{o}lder's inequality in time:
\begin{align*}
  \int_0^t\!\norm{\rh_{xt}(\cdot,s)}_{L^1}\,ds
  &\;\leq\; C\int_0^t\norm{\rh_{xt}}^{1/2}\norm{\rh_{xxt}}^{1/2}\,ds \\
  &\;\leq\; C\left(\int_0^t\norm{\rh_{xt}}^2\,ds\right)^{1/4}
    \!\left(\int_0^t\norm{\rh_{xxt}}^2\,ds\right)^{1/4}\!\cdot t^{1/2},
\end{align*}
which grows as $O(t^{1/2})$.

Thus the Duhamel bound~\eqref{eq:ThaL1Duhamel}
does not directly yield a \emph{uniform} $L^1$ bound on $\Tha$.
We instead obtain the needed control from the global $H^2$ bound
of Lemma~\ref{lem:3}.  By the
Gagliardo--Nirenberg inequality on $\R$:
\begin{equation}\label{eq:GNS}
  \norm{\Tha}_{L^1(\R)}
  \;\leq\; C\,\norm{\Tha}_{L^2(\R)}^{1/2}\,
    \norm{\Tha}_{H^2(\R)}^{1/2}
  \;\leq\; C\,\norm{\Tha}^{1/2}\,\norm{\Tha_{xx}}^{1/2},
\end{equation}
where we used $\norm{\Tha}_{H^2}\leq C(\norm{\Tha}+\norm{\Tha_{xx}})$
and the equivalence of Sobolev norms on $\R$.
Since $\norm{\Tha}^2\leq 2\calE(t)\leq 2C_5$ and
$\norm{\Tha_{xx}}^2\leq\calG(t)\leq C_{10}$ from
Lemmas~\ref{lem:1} and~\ref{lem:3}:
\begin{equation}\label{eq:ThaL1bound}
  \norm{\Tha(\cdot,t)}_{L^1}
  \;\leq\; C\,C_5^{1/4}\,C_{10}^{1/4}
  \;=:\; C_{13}'.
\end{equation}
This bound is uniform in $t$, depends only on the parameters and
initial data (through $C_5$ and $C_{10}$), and uses only the
a~priori bounds from the three lemmas---no circularity.

Applying~\eqref{eq:interp} to $\Tha$ with the uniform $L^1$
bound~\eqref{eq:ThaL1bound}:
$\norm{\Tha}^2\leq C_{\mathrm{int}}\,C_{13}'\,\norm{\Tha_x}$.
Together with the analogous bounds for the $v$-components
(where $L^1$ norms are uniformly bounded by~\eqref{eq:vL1bound}):
\begin{equation}\label{eq:Ecoercive}
  \norm{v}^2 + \norm{v_t}^2 + \norm{v_x}^2 + \norm{v_{xx}}^2
  + \norm{\Tha}^2
  \;\leq\; C_{14}\,\calD(t)^{1/2},
\end{equation}
where $C_{14}$ depends on $\norm{\rh_0}_{L^1}$, $\norm{\phi}_{L^1}$,
and the parameters, but \emph{not} on $t$.  The key improvement over
working in $\rh$-variables is that the exponential decay
$e^{-t/\eps}$ in $v = e^{-t/\eps}\rh$ eliminates the polynomial
growth that would otherwise arise in the $L^1$ bounds.

For the energy $\calE(t) = E_1(t) + \tfrac{1}{2}\norm{\Tha}^2$,
we note that $E_1(t)$ is controlled by $\norm{v}^2$, $\norm{v_t}^2$,
$\norm{v_x}^2$, and $\norm{v_{xx}}^2$.  The quartic term
$e^{2t/\eps}\norm{v_x}^4/4$ already appears inside $E_1$
(cf.~\eqref{eq:E1}), so no additional exponential factor is introduced.
By the Sobolev embedding $\norm{v_x}_{L^\infty}\leq
C\norm{v_{xx}}^{1/2}\norm{v_x}^{1/2}$:
\[
  \frac{e^{2t/\eps}\norm{v_x}^4}{4}
  \;\leq\; C\,\norm{v_{xx}}\,\norm{v_x}^3
  \;\leq\; C\,\calE(t)^{3/2}.
\]
Under smallness $\calE(t)\leq\del_1\leq 1$, we have $\calE^{3/2}\leq\calE$,
so all components of $\calE$ are controlled by~\eqref{eq:Ecoercive}.
Hence:
\begin{equation}\label{eq:Dcoercive}
  \calD(t) \;\geq\; \frac{\calE(t)^2}{C_{15}^2},
\end{equation}
where $C_{15}$ depends on $C_{14}$, $\norm{\phi}_{L^1}$, and
the parameters.

\noindent\textbf{Step~2: The nonlinear term.}
From the proof of Lemma~\ref{lem:1}, the combined energy satisfies
\begin{equation}\label{eq:calEdtbound}
  \frac{d}{dt}\calE(t)
  \;\leq\; -\,\mu\,\calD(t) + C_{11}\,\calE(t)^2,
\end{equation}
where $\mu = \min\bigl(\tfrac{1}{\eps},\tfrac{\kap}{2}\bigr) > 0$ is the
combined dissipation rate and
\begin{equation}\label{eq:calD}
  \calD(t) \;=\; \norm{v_t}^2 + \norm{v_{tx}}^2 + \norm{\Tha_x}^2
\end{equation}
is the dissipation functional.
Substituting the lower bound~\eqref{eq:Dcoercive} into the
nonlinear term: $C_{11}\calE^2 \leq C_{11}C_{15}^2\calD$.
We choose the smallness parameter $\del_0\in(0,\del_1]$ in~\eqref{eq:smalldata}
small enough that the global bound $\calE(t)\leq C_5(\del_0)$
from Lemma~\ref{lem:1} satisfies $C_5\leq\del_1$ (ensuring
the lower bound~\eqref{eq:Dcoercive} applies), and we impose
\begin{equation}\label{eq:del0choice}
  C_{11}\,C_{15}^2 \;\leq\; \frac{\mu}{2},
\end{equation}
which is achieved by taking $\norm{\rh_0}_{L^1}$,
$\norm{\phi}_{L^1}$, and $\calE(0)$ sufficiently small (since
$C_{15}$ depends on the $L^1$ norms of the initial data, and
these control the constant in~\eqref{eq:Dcoercive}).
Under~\eqref{eq:del0choice}:
\begin{equation}\label{eq:calEdtclean}
  \frac{d}{dt}\calE(t)
  \;\leq\; -\,\mu\,\calD(t) + \frac{\mu}{2}\calD(t)
  \;=\; -\,\frac{\mu}{2}\,\calD(t).
\end{equation}

\noindent\textbf{Step~3: The decay estimate.}
Substituting~\eqref{eq:Dcoercive} into~\eqref{eq:calEdtclean}:
\begin{equation}\label{eq:Bernoulli}
  \frac{d}{dt}\calE(t)
  \;\leq\; -\,\frac{\mu}{2C_{15}^2}\,\calE(t)^2.
\end{equation}
This is a Bernoulli inequality with no competing positive term.
Setting $\psi(t) = 1/\calE(t)$:
\begin{equation}\label{eq:psiODE}
  \frac{d}{dt}\psi(t) \;\geq\; \frac{\mu}{2C_{15}^2}
  \;=:\; \alpha > 0.
\end{equation}
Integrating:
\begin{equation}\label{eq:psibound}
  \psi(t) \;\geq\; \psi(0) + \alpha\,t,
\end{equation}
whence
\begin{equation}\label{eq:Edecay1}
  \calE(t) \;\leq\; \frac{1}{\psi(0) + \alpha\,t}
  \;\leq\; \frac{C_{16}}{1+t}.
\end{equation}
This gives $\calE(t) \leq C_{16}/(1+t)$, establishing~\eqref{eq:decayE}.
Since $\norm{\Tha}^2 \leq 2\calE(t)$
(as $\calE(t) = E_1(t)+\tfrac{1}{2}\norm{\Tha}^2 \geq \tfrac{1}{2}\norm{\Tha}^2$),
bound~\eqref{eq:decayheat} follows immediately.
The component bounds~\eqref{eq:decayv} follow from the definition of
$E_1(t)$: since $E_1(t)\geq\norm{v}^2/(2\eps^2)$, $E_1(t)\geq\norm{v_t}^2/2$,
and $E_1(t)\geq(\del/2)\norm{v_{xx}}^2$, each squared norm is bounded by
$C\calE(t)\leq C_{16}C/(1+t)$.\end{proof}

\begin{remark}
The Bernoulli argument yields $\calE(t) = O((1+t)^{-1})$,
which is optimal given~\eqref{eq:Dcoercive} and implies the
thermal equilibration rate~\eqref{eq:decayheat}.
The improvement over the naive $(1+t)^{-1/2}$ rate arises because
the uniform $L^1$ bounds on $v = e^{-t/\eps}\rh$ yield a
time-independent constant in the lower bound~\eqref{eq:Dcoercive}.
\end{remark}

\section{Concluding Remarks}
\label{sec:remarks}

The main theorem shows that the strain $u_x$ remains globally smooth,
so the thermoelastic phase-transition layers are smooth and of finite
thickness.  Two related questions remain.

\begin{remark}
The decay in Section~\ref{sec:decay} applies to the temperature
perturbation $\Tha$ and to the coupled energy $\calE(t)$, but the decay
of $\norm{\rh(\cdot,t)}^2$ in the original mechanical variable remains
to be addressed.
Lemmas~\ref{lem:1}--\ref{lem:3} bound $\rh$ in $H^3$ uniformly in $T$, so
$\rh$ does not grow; the question is whether a quantitative rate
holds.  The exponential factor in $\rh = e^{t/\eps}v$ prevents a direct
transfer of the rate from $v$ to $\rh$.
\end{remark}

\begin{remark}
The smallness condition~\eqref{eq:smallE0} is used in Lemma~\ref{lem:1}
to control the quartic and quadratic coupling terms.  Whether it can be
removed on $\R$ remains to be addressed.  The bounded-domain results
of~\cite{PawlowZajaczkowski,Sprekels} do not need it, but they use
Poincar\'{e}'s inequality, which is not available here.
\end{remark}


\end{document}